\newtheorem{main}{Theorem}
\newtheorem{theorem}{Theorem}[section]
\newtheorem{lem}[theorem]{Lemma}
\newtheorem{prop}[theorem]{Proposition}
\newtheorem{Conjecture}[theorem]{Conjecture}
\theoremstyle{definition}
\newtheorem{definition}[theorem]{Definition}
\newtheorem{notation}[theorem]{Notation}
\newtheorem{Remark}[theorem]{Remark}
\newtheorem{question}[theorem]{Question}
\def\ca{\curvearrowright}
\def\e{\epsilon}
\def\al{\alpha}
\def\bb{\mathbb}
\def\up{\upsilon}
\def\del{\partial}
\def\f{\flat}
\def\Cal{\mathcal}
\def\inv{{\scriptscriptstyle {-1}}}
\def\HFD{H$_{\text{\tiny FD}}$}
\DeclareMathOperator*{\GL}{{GL}}
\DeclareMathOperator*{\diam}{{diam}}
\numberwithin{equation}{section}
\newcommand{\ip}[2]{\langle #1, #2 \rangle}
\newcommand{\abs}[1]{\lvert#1\rvert}
\providecommand{\bbs}[1]{\Bigl\lvert #1 \Bigr\rvert}
\providecommand{\nor}[1]{\lVert #1 \rVert}
\providecommand{\bnor}[1]{\Bigl\lVert #1 \Bigr\rVert}
\begin{document}
\title[Affine Ergodic Theorems]{On the Ergodic Theorem for Affine Actions\\ on Hilbert Space}
\author[I. Chifan]{Ionut Chifan}
\address{Department of Mathematics, University of Iowa, 14 MacLean Hall, IA
52242, USA and IMAR, Bucharest, Romania}
\email{ionut-chifan@uiowa.edu}
\thanks{I.C.\ was supported in part by NSF Grant \#1263982.}
\author[T. Sinclair]{Thomas Sinclair}
\address{Department of Mathematics, University of California, Los Angeles,
Box 951555, Los Angeles, CA 90095-1555, USA}
\email{thomas.sinclair@math.ucla.edu}
\thanks{T.S.\ was supported by an RTG Assistant Adjunct Professorship}
\subjclass[2010]{22D40; 20F65; 43A15}
\date{\today }
\dedicatory{}
\keywords{affine action, mean ergodic
theorem, groups of polynomial growth}

\begin{abstract}
The note establishes a new weak mean ergodic theorem (Theorem \ref%
{main:norm}) for $1$-cocycles associated to weakly mixing representations of
amenable groups. 
\end{abstract}

\maketitle

\section*{Introduction}

In a groundbreaking paper \cite{Shalom}, Shalom discovered several deep
connections between the representation theory of an amenable group and
aspects of its large-scale geometry. One motivation for his work, among
several others, was the development of a ``spectral'' approach to Gromov's
celebrated theorem on the virtual nilpotency of groups of polynomial growth 
\cite{Gromov}. More precisely, Shalom established, Theorem 1.11 in \cite%
{Shalom}, that if it could be shown that any group of polynomial growth $G$
possessed property H$_{\text{{\tiny FD}}}$ (see Definition \ref{h-fd}), then this would suffice to
establish that $G$ would have a finite-index subgroup with infinite
abelianization---the key step in Gromov's proof which involves the use of
Hilbert's 5$^{\text{th}}$ problem. As a means of establishing property H$_{\text{{\tiny FD}}}$, Shalom conjectured that for a group of
polynomial growth, a sequence of almost fixed points for any affine action
with weakly mixing linear part could be obtained by averaging the associated 
$1$-cocycle over an appropriate subsequence of $n$-balls centered at the
identity: see section 6.7 in \cite{Shalom}. In their paper \cite{CTV},
Cornulier, Tessera, and Valette made decisive contributions to Shalom's
program. In particular, their investigation of averaging properties of
groups over controlled F\o lner sequences has directly influenced the
approach taken in this paper.

\subsection*{Statement of results}

We establish the following weak mean ergodic theorem for affine actions of finitely-generated amenable groups on Hilbert space.
We will say that a sequence $(\mu_n)$ of regular Borel probability measures
on a countable, discrete group $G$ forms a \emph{Reiter sequence} if $%
\nor{\mu_n - g\ast\mu_n}\to 0$ for all $g\in G$, where $g\ast\mu_n(h) =
\mu_n(g^{{\scriptscriptstyle {-1}}}h)$. A countable discrete group is said
to \emph{amenable} if it admits a Reiter sequence.

\begin{main}[Weak Mean Ergodic Theorem]
\label{main:norm} Let $\pi: G\to \mathscr O(\mathcal{H})$ be an ergodic
orthogonal representation of a finitely generated amenable group $G$, and
let $b: G\to \mathcal{H}$ be a $1$-cocycle associated to $\pi$. Let $S$ be a
finite symmetric generating set for $G$, and let $\lvert \,\cdot\,\rvert$
denote the word length in $S$. If $(\mu_n)$ is a Reiter sequence for $G$,
then 
\begin{equation}  \label{wmet}
\int \frac{1}{\lvert g\rvert}\,b(g)\, d\mu_n(g)\to 0
\end{equation}
in the weak topology on $\Cal H$. If $\pi$ is weakly mixing, then 
\begin{equation}  \label{wmwmet}
\int \frac{1}{\lvert g\rvert}\,\lvert \langle b(g), \xi \rangle\rvert\,
d\mu_n(g)\to 0
\end{equation}
for all $\xi\in \mathcal{H}$.
\end{main}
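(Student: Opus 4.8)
The plan is to regard the normalized cocycle $F(g) = \lvert g\rvert^{-1} b(g)$ (with $F(e)=0$) as a uniformly bounded $\Cal H$-valued function on $G$ and to run an asymptotic-invariance argument driven by the Reiter property. The starting estimate is the subadditivity bound: writing $g = s_1\cdots s_k$ with $k = \lvert g\rvert$ and using the cocycle identity $b(gh) = b(g) + \pi(g) b(h)$ together with the orthogonality of $\pi$, one gets $\nor{b(g)} \le \sum_i \nor{b(s_i)} \le C\lvert g\rvert$, where $C = \max_{s\in S}\nor{b(s)}$; hence $\nor{F}\le C$ everywhere. I also use that an infinite group's Reiter sequence escapes to infinity, i.e. $\mu_n(K)\to 0$ for every finite $K\subseteq G$: the Reiter condition forces $\abs{\mu_n(\{a\}) - \mu_n(\{b\})}\to 0$ for all $a,b$, so all point masses become asymptotically equal and, total mass being $1$, must vanish. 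Consequently $\int \lvert sg\rvert^{-1}\, d\mu_n(g)\to 0$, and more generally any weight of the form $\lvert g\rvert\lvert sg\rvert^{-2}$ integrates to $0$. (Throughout I assume $G$ is infinite, the relevant case, which is what guarantees this escape of mass.)

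To prove \eqref{wmet}, set $\eta_n = \int F(g)\, d\mu_n(g)$, a bounded sequence in $\Cal H$. For a generator $s\in S$, a change of variables gives $\int F(sg)\,d\mu_n(g) = \int F\, d(s\ast\mu_n)$, which differs from $\eta_n$ by at most $C\nor{s\ast\mu_n - \mu_n}\to 0$ by the Reiter condition. On the other hand, expanding $b(sg) = b(s) + \pi(s)b(g)$ yields $F(sg) = \lvert sg\rvert^{-1} b(s) + \pi(s)\bigl(\tfrac{\lvert g\rvert}{\lvert sg\rvert} F(g)\bigr)$ off the single point $g = s^{-1}$; upon integrating, the first term vanishes because $\int\lvert sg\rvert^{-1}\,d\mu_n\to 0$, while replacing the factor $\lvert g\rvert/\lvert sg\rvert$ by $1$ costs at most $C\int \lvert sg\rvert^{-1}\,d\mu_n\to 0$ since $\bigl\lvert \lvert g\rvert - \lvert sg\rvert\bigr\rvert\le 1$. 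Hence $\int F(sg)\,d\mu_n(g) = \pi(s)\eta_n + o(1)$, and comparing the two computations gives $\pi(s)\eta_n - \eta_n\to 0$ for every $s\in S$, which telescopes along a word to $\pi(g)\eta_n - \eta_n\to 0$ for all $g\in G$. Any weak cluster point $\eta$ of the bounded sequence $(\eta_n)$ is therefore fixed by every $\pi(g)$, so $\eta = 0$ by ergodicity; since every weak cluster point of a bounded sequence equals $0$, we conclude $\eta_n\to 0$ weakly, which is \eqref{wmet}.

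The second statement is the crux, and the difficulty is the absolute value, which destroys linearity and so cannot be handled by weak convergence of $\eta_n$ alone. The plan is to pass to the second moment. By Cauchy--Schwarz for the probability measure $\mu_n$, $\bigl(\int \lvert g\rvert^{-1}\abs{\ip{b(g)}{\xi}}\,d\mu_n\bigr)^2\le \int \lvert g\rvert^{-2}\abs{\ip{b(g)}{\xi}}^2\,d\mu_n$, so it suffices to show the right-hand side tends to $0$. Since $\Cal H$ is real, $\abs{\ip{b(g)}{\xi}}^2 = \ip{b(g)\otimes b(g)}{\xi\otimes\xi}_{\Cal H\otimes\Cal H}$, so setting $\Xi_n = \int \lvert g\rvert^{-2}\, b(g)\otimes b(g)\, d\mu_n(g)$, a bounded sequence in $\Cal H\otimes\Cal H$ because $\nor{b(g)}^2\le C^2\lvert g\rvert^2$, reduces the problem to proving $\ip{\Xi_n}{\xi\otimes\xi}\to 0$.

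I would then rerun the argument of the second paragraph for the orthogonal representation $\pi\otimes\pi$ on $\Cal H\otimes\Cal H$. Expanding $b(sg)\otimes b(sg) = (\pi\otimes\pi)(s)\bigl(b(g)\otimes b(g)\bigr) + b(s)\otimes b(s) + b(s)\otimes\pi(s)b(g) + \pi(s)b(g)\otimes b(s)$, the last three terms have norm $O(\lvert g\rvert)$, so after dividing by $\lvert sg\rvert^2$ and integrating they vanish (the weights $\lvert sg\rvert^{-2}$ and $\lvert g\rvert\lvert sg\rvert^{-2}$ integrate to $0$), and replacing $\lvert g\rvert^2/\lvert sg\rvert^2$ by $1$ again costs $o(1)$ because $\bigl\lvert\lvert g\rvert^2 - \lvert sg\rvert^2\bigr\rvert\le 2\lvert g\rvert + 1$. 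Together with the Reiter estimate this yields $(\pi\otimes\pi)(s)\Xi_n - \Xi_n\to 0$ for all $s$, hence for all $g$, so every weak cluster point of $(\Xi_n)$ is a $(\pi\otimes\pi)$-invariant vector. This is where weak mixing enters decisively: $\pi$ is weakly mixing exactly when $\pi\otimes\pi$ has no nonzero invariant vector (a nonzero invariant vector corresponds to a nonzero Hilbert--Schmidt operator commuting with $\pi$, whose nonzero spectral projections would furnish a finite-dimensional subrepresentation). Hence every cluster point is $0$, so $\Xi_n\to 0$ weakly and in particular $\ip{\Xi_n}{\xi\otimes\xi}\to 0$, establishing \eqref{wmwmet}. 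The main obstacle is precisely this linearization: recognizing that squaring and tensoring turns the troublesome absolute value into a linear functional paired with $\xi\otimes\xi$, and that weak mixing is exactly the ergodicity of $\pi\otimes\pi$ needed to annihilate the resulting second-moment average.
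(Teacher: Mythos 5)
Your proposal is correct and follows essentially the same route as the paper: the crux --- linearizing the absolute value via the tensor square $\frac{1}{\lvert g\rvert^2}\,b(g)\otimes b(g)$, invoking weak mixing precisely as ergodicity of $\pi\otimes\pi$, and finishing with Cauchy--Schwarz --- is exactly the paper's argument (Lemma \ref{lem:tensor} combined with the proof of Theorem \ref{thm:arraymet}), and your first part uses the same ingredients (cocycle identity, Reiter property, escape of mass, boundedness of the normalized cocycle). The only cosmetic difference is in how ergodicity is deployed for \eqref{wmet}: the paper pairs the averages against the dense span of vectors $(1-\pi(g))\eta$, whereas you prove asymptotic $\pi$-invariance of the averages in norm and annihilate weak cluster points --- dual renditions of the standard mean-ergodic argument --- and both proofs (the paper's tacitly, yours explicitly) require $G$ to be infinite so that Reiter measures escape finite sets.
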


\noindent We note that while $\frac{1}{\abs{e}}$ is technically undefined, by convention it will be understood to denote $0$ here and throughout.

In particular, if $b: \mathbb{Z}\to \mathcal{H}$ is a $1$-cocycle, then $b$
is completely determined by $\xi\doteq b(1)$, so that for $n\geq 1$ we have $%
\frac{1}{n}\, b(n) = A_n(\xi) := \frac{1}{n}\sum_{k=0}^{n-1}\pi(k)\xi$:
a similar formula holds for $-n$ via the identity $b(-n) = -\pi(-n)b(n)$.
So, in this case the result reduces to the fact that the Ces\`aro sums $%
C_n(\xi,\eta) = \frac{1}{n}\sum_{k=1}^n \langle A_k(\xi), \eta \rangle$ and $%
C_n^{\prime }(\xi,\eta) = \frac{1}{n}\sum_{k=1}^n \lvert \langle A_k(\xi),
\eta \rangle\rvert$ converge to $0$ for all $\xi,\eta\in \mathcal{H}$. The stronger summation holds for all ergodic representations and is
equivalent to the (weak) mean ergodic theorem of von Neumann.

In fact, for the class of abelian groups, we will give a new ``geometric'' proof of the mean ergodic theorem as a consequence of the following theorem in combination with Theorem \ref{main:norm}.

\begin{main}
\label{main:controlled} Let $G$ be finitely generated amenable group
admitting a \emph{controlled F\o lner sequence} (see Definition \ref{defn:contr-Folner}). Let $\pi: G\to\mathscr O(\Cal H)$ be an orthogonal representation, and let $b: G\to \Cal H$ be a 1-cocycle associated to $\pi$. Suppose that \begin{equation}\int \frac{1}{\abs{g}}\ip{b(g^{\inv})}{\xi} d\mu_n(g)\to 0\end{equation} for all $\xi\in \Cal H$ and all Reiter sequences $(\mu_n)$. Then the affine action
$G\curvearrowright^T \mathcal{H}$ associated to $b$ admits a
sequence of almost fixed points.
\end{main}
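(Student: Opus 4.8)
\emph{Plan.} The idea is to trade the \emph{norm} conclusion for a \emph{weak} one—which is exactly what the averaging hypothesis supplies—and then to recover the norm statement for free by convexity. A generator $s\in S$ moves a vector according to $T_s\eta-\eta=(\pi(s)-I)\eta+b(s)$, an \emph{affine} function of $\eta$; since $\pi$ is isometric one checks $\nor{T_g\eta-\eta}\le\abs{g}\max_{s\in S}\nor{T_s\eta-\eta}$, so a sequence of almost fixed points is the same as a sequence $(\eta_n)$ with $\max_{s\in S}\nor{T_s\eta_n-\eta_n}\to 0$. Setting
\[
K\doteq\bigl\{\,\bigl((\pi(s)-I)\eta+b(s)\bigr)_{s\in S}\ :\ \eta\in\Cal H\,\bigr\}\subseteq\Cal H^{S},
\]
the existence of almost fixed points is precisely the assertion $0\in\overline{K}^{\,\nor{\cdot}}$. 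Now $K$ is the image of $\Cal H$ under an affine map, hence an affine—in particular convex—subset of the Hilbert space $\Cal H^{S}$ (recall $S$ is finite). By Mazur's theorem the norm closure and the weak closure of a convex set coincide, so it suffices to prove $0\in\overline{K}^{\,\mathrm{weak}}$: that is, to produce $\eta_n\in\Cal H$ with $\ip{(\pi(s)-I)\eta_n+b(s)}{\xi}\to 0$ for every $s\in S$ and $\xi\in\Cal H$.

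I would obtain the candidates $\eta_n$ by averaging the cocycle against a \emph{radial, logarithmically tapered} weight built from the word length. Fix radii $r_n\to\infty$, let $H_m=\sum_{k=1}^m k^{\inv}$, and set $w_n(h)=Z_n^{\inv}\,(H_{r_n}-H_{\abs{h}})$ for $0\le\abs{h}\le r_n$ (and $0$ otherwise), with $Z_n$ chosen so that $\sum_h w_n(h)=1$; put $\eta_n\doteq\sum_h w_n(h)\,b(h)$. The cocycle identity $b(sh)=b(s)+\pi(s)b(h)$ together with $\sum_h w_n(h)=1$ telescopes to
\[
(\pi(s)-I)\eta_n+b(s)=\sum_{h}\bigl(w_n(s^{\inv}h)-w_n(h)\bigr)\,b(h).
\]
The harmonic profile is chosen so that this discrete radial gradient is essentially $\abs{h}^{\inv}$: because a generator changes word length by $0$ or $\pm1$, one has $w_n(s^{\inv}h)-w_n(h)=Z_n^{\inv}\,\delta_s(h)/\abs{h}+O\!\left(Z_n^{\inv}\abs{h}^{-2}\right)$ with $\delta_s(h)=\abs{h}-\abs{s^{\inv}h}\in\{-1,0,1\}$, and the remainder is summable against $\nor{b(h)}\le L\abs{h}$ and vanishes after dividing by $Z_n$. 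Thus the weight $\abs{h}^{\inv}$ demanded by the hypothesis is produced automatically, and
\[
\ip{(\pi(s)-I)\eta_n+b(s)}{\xi}=\frac1{Z_n}\Bigl(\sum_{h\in D_s}\frac{\ip{b(h)}{\xi}}{\abs{h}}-\sum_{h\in I_s}\frac{\ip{b(h)}{\xi}}{\abs{h}}\Bigr)+o(1),
\]
where $D_s=\{h:\abs{s^{\inv}h}=\abs{h}-1\}$ and $I_s=\{h:\abs{s^{\inv}h}=\abs{h}+1\}$ are the two hemispheres of the ball $B_{r_n}$ on which $s$ decreases, respectively increases, word length.

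It then remains only to recognize the two sums as integrals to which the hypothesis applies. Normalizing $\mathbf{1}_{D_s}$ and $\mathbf{1}_{I_s}$ to probability measures $\mu_n^{+}$ and $\mu_n^{-}$ and substituting $g=h^{\inv}$ (legitimate since $\abs{\cdot}$ is inversion-invariant on the symmetric set $S$), each term becomes a bounded multiple $\tfrac{\abs{D_s}}{Z_n}$, resp.\ $\tfrac{\abs{I_s}}{Z_n}$, of $\int\abs{g}^{\inv}\ip{b(g^{\inv})}{\xi}\,d\check\mu_n^{\pm}(g)$. Provided the reversed measures $\check\mu_n^{\pm}$ are Reiter sequences and these multiples stay bounded, the hypothesis forces each term to $0$; hence $0\in\overline{K}^{\,\mathrm{weak}}$, and the first paragraph upgrades this to a genuine sequence of almost fixed points. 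Note that no boundedness of $(\eta_n)$ is needed: the $\eta_n$ only witness $0$ in the weak closure, while the final almost-fixed-point sequence is furnished abstractly by Mazur.

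The main obstacle—and the only place the word \emph{controlled} enters—is this last verification. Two facts must be drawn from the definition of a controlled F\o lner sequence: (i) a \emph{growth bound}, that $\abs{D_s}$ and $\abs{I_s}$ are $O(Z_n)$, comparing the cardinality of a hemisphere with the word-length-weighted bulk $Z_n=\sum_h \abs{h}\,w_n(h)$; and (ii) an \emph{asymptotic invariance}, that the uniform measures on $D_s$, $I_s$, and their reverses are Reiter. The plausibility of (ii) is geometric: each hemisphere is a macroscopic portion of $B_{r_n}$, not a thin shell, so translating by a fixed generator disturbs it only along the sphere $\{\abs{h}=r_n\}$ and the equator $\{\abs{s^{\inv}h}=\abs{h}\}$, both of negligible relative measure—but making this rigorous in the stated generality, including the passage between left- and right-invariance needed to reverse the measures, is precisely the technical content I expect the definition of a controlled F\o lner sequence to be framed to guarantee.
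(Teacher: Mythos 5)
Your opening reduction is fine: the set of tuples $\bigl((\pi(s)-I)\eta+b(s)\bigr)_{s\in S}$ is convex, so by Mazur it suffices to exhibit vectors making these tuples weakly small, and this mirrors the closing step of the paper's own proof (weak convergence of $T_s\eta_n-\eta_n$ to $0$, then passage to convex combinations). The telescoping identity for your tapered average is also correct. The gap is in the construction of the $\eta_n$: you average over word-length balls $B_{r_n}$, whereas the hypothesis is an isoperimetric inequality $\abs{\partial F_n}/\abs{F_n}\leq K/\diam F_n$ for \emph{some} sequence of finite sets $F_n$ (Definition \ref{defn:contr-Folner}), and it says nothing whatsoever about balls. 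The two facts you defer to that definition, (i) $\abs{D_s},\abs{I_s}=O(Z_n)$ and (ii) the reversed hemisphere measures form Reiter sequences, are not consequences of it, and (i) demonstrably fails for groups squarely inside the theorem's scope. Take the lamplighter group $G=\mathbb{Z}_2\wr\mathbb{Z}$, which is in the class $\Cal{CF}$ by Proposition \ref{prop-cf}, and for which $\abs{B_r}\asymp\gamma^r$ with $\gamma>1$ in the standard generators. Then $Z_n=\sum_{h\in B_{r_n}}(H_{r_n}-H_{\abs{h}})=\sum_{k=1}^{r_n}\abs{B_{k-1}}/k\asymp\abs{B_{r_n}}/r_n$; for the lamp generator $\sigma$ one has $\abs{\sigma h}=\abs{h}\pm 1$ for every $h$, so $\abs{D_\sigma}+\abs{I_\sigma}=\abs{B_{r_n}}$ and hence $\max\bigl(\abs{D_\sigma},\abs{I_\sigma}\bigr)/Z_n\gtrsim r_n\to\infty$. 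A vanishing integral times an unbounded prefactor gives nothing. The same arithmetic kills your error term: $Z_n^{-1}\sum_{h\in B_{r_n}}O(\abs{h}^{-2})\nor{b(h)}\leq Z_n^{-1}\,O\bigl(\sum_{k\leq r_n}\abs{S(k)}/k\bigr)=O(1)$, not $o(1)$. As for (ii), nothing supports it: in exponential-growth members of $\Cal{CF}$ (lamplighters, polycyclic groups of exponential growth) the controlled F\o lner sets are "rectangle-like" sets very far from balls, precisely because ball-based sets fail to be asymptotically invariant. Your construction is intrinsically a polynomial-growth construction and cannot prove the theorem as stated.

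The structural difference from the paper is worth internalizing. You try to use the hypothesis \emph{constructively}, feeding into it specific Reiter sequences manufactured from the geometry of balls; this forces you to prove those sequences exist with strong properties, which is exactly what breaks. The paper instead uses the hypothesis \emph{universally}: since it holds for all Reiter sequences, a contradiction argument (Proposition \ref{thm:h-rigid}: a higson function whose averages vanish along every Reiter sequence lies in $C_0(G)$, proved by building a bad Reiter sequence out of translated F\o lner sets) upgrades the averaged hypothesis to the pointwise statement $\frac{1}{\abs{g}}\ip{b(g)}{\xi}\to 0$ as $\abs{g}\to\infty$, i.e., weak sublinearity. Only then does it average, and it averages over the controlled F\o lner sets themselves (Theorem \ref{thm:w-sublinear}): on $\partial F_n$ the cocycle obeys $\nor{b(h)}\leq C(\diam F_n+1)$, and the controlled inequality exactly cancels this linear factor, yielding the uniform bound $\nor{T_g\eta_n-\eta_n}\leq 2CK$ and, with weak sublinearity, $\ip{T_g\eta_n-\eta_n}{\xi}\to 0$; convexity then finishes as in your first paragraph. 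If you want to salvage your scheme, the taper would have to be adapted to the sets $F_n$ rather than to balls, but then the discrete gradient is no longer $\pm 1/\abs{h}$ and the connection to the hypothesis evaporates; the paper's detour through the pointwise $C_0$ statement is what makes any such identification unnecessary.
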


To see how this implies the mean ergodic theorem for $\bb Z$, we point out that by an observation of Cornulier--Tessera--Valette (Proposition 3.1 in \cite{CTV}) a consequence of $G\ca^T\Cal H$ admitting almost fixed points is that \[\frac{1}{\abs{g}}\nor{b(g)}\to 0\] as $\abs{g}\to\infty$; in other words, the $1$-cocycle $b$ has \emph{sublinear growth}. In fact, sublinearity of a $1$-cocycle is actually equivalent in general to the mean ergodic theorem, i.e.,   
the statement that
\[\int \frac{1}{\abs{g}}\nor{b(g)} d\mu_n(g)\to 0\] for all Reiter sequences $(\mu_n)$ (see Proposition \ref{thm:h-rigid}).

The significance of averaging on the right rather than on the left in Theorem \ref{main:controlled} is the it allows one to conclude that the cocycle is weakly sublinear, i.e., $\frac{1}{\abs{g}} b(g)\to 0$ in the weak topology, from which point averaging arguments over a controlled F\o lner sequence produce the desired sequence of almost fixed points. In the weak mixing case, however, one already knows that the $1$-cocycle must be ``almost weakly sublinear'' in the sense that for any $\e>0$ and $\xi\in \Cal H$, the subset consisting of all elements $g\in G$ such that $\abs{\ip{b(g)}{\xi}}\geq \e\abs{g}$ has measure $0$ for all left invariant means on $G$. In the case of a \emph{compact} representation, this would be sufficient to conclude weak sublinearity. It seems plausible that under additional structural assumptions on the $1$-cocycle or the group, one may be able to derive weak sublinearity in the general case.

\subsection*{Remarks on the proofs}

The paper is an application of the authors' philosophy of investigating the
``large scale'' properties of affine actions of groups on Hilbert space.
Though the above results are stated for affine actions, even in this case
the proofs crucially rely on a coarsening of the notion of an affine
action, the concept of an \emph{array}---the terminology is meant to evoke the geometric regularity of the orbits of
affine actions---formalized by the authors in \cite{CS}. The main reason is
that starting from an affine action $G\curvearrowright^T\mathcal{H}$, from
the associated $1$-cocycle $b$, one can construct an array $\alpha:G\to 
\mathcal{V}$ into a $G$-invariant positive cone $\mathcal{V}\subset \mathcal{%
H}\otimes\mathcal{H}$. Such a map cannot lie within a uniformly bounded
distance of an unbounded $1$-cocycle, since the equation $b(g) = -\pi_g b(g^{%
\scriptscriptstyle {-1}})$ holds for all $g\in G$ for any $1$-cocycle $b$.
Some potential implications of the positivity of this map will be discussed in Section \ref{Final-Remarks}

The notion of an array is best viewed from a geometric, rather than
algebraic, perspective. Indeed, a length function on a discrete group $G$
may be viewed as a positive array associated with the trivial
representation. In general, an array can be thought of as a Hilbert-space
valued length function on $G$ which is compatible with some orthogonal $G$%
-representation $\pi$. The presence of an array then becomes a tool through
which properties of the representation can be used to impose large scale
conditions on the group, and \emph{vice versa}. For example, it is shown in 
\cite{CS}, Proposition 1.7.3, that a non-amenable group admitting a proper
array into its left-regular representation, e.g., non-elementary Gromov hyperbolic groups, cannot
be decomposed as a direct product of infinite groups. Turning to the topic
at hand, the presence of a controlled F\o lner sequence imposes a strong
large-scale ``finite dimensionality'' condition on the group $G$---for the
case of weak polynomial growth, a point already well made in \cite{Gromov}.
Viewed in this light, the content of Theorem \ref{main:controlled} is that
this forces any geometric realization of the group which is uniformly
distributed throughout an infinite-dimensional Hilbert space to be
essentially degenerate.

\section{Geometry and Representation Theory}

\label{sec:geom-repn} In this section we will introduce the main
definitions, concepts, and perspective to provide a context for the
exposition.

\begin{notation}
Let $X$ be a set and let $f,g: X\to \mathbb{R}_{\geq 0}$ be maps. We write $%
f = O(g)$ or $f\ll g$ if there exists a finite set $F\subset X$ and a
constant $C>0$ such that $f(x)\leq C\cdot g(x)$ for all $x\in X\setminus F$.
We will write $f \lesssim g$ if $f\ll g$ for a constant $C\leq 1$. We write $%
f = o(g)$ if $f\lesssim \epsilon\cdot g$ for all $\epsilon>0$.
\end{notation}

\subsection{Isometric actions on Hilbert space}

\begin{definition}
An orthogonal representation $\pi: G\to \mathscr O(\mathcal{H})$ is said to
be \emph{ergodic} if for any $\xi\in \mathcal{H}$ we have that $\pi_g(\xi) =
\xi$ for all $g\in G$ if and only if $\xi = 0$, i.e., $\pi$ has no non-zero
invariant vectors. The representation $\pi$ is said to be \emph{weakly mixing%
} if the diagonal representation $\pi\otimes\pi: G\to \mathscr O(\mathcal{H}%
\otimes \mathcal{H})$ is ergodic. In particular weakly mixing
representations are ergodic.
\end{definition}

If $\pi: G\to \mathscr O(\mathcal{H})$ is an orthogonal representation, a
map $b: G\to \mathcal{H}$ is said to be a $1$-cocycle associated to $\pi$ if
it satisfies the Leibniz identity 
\begin{equation*}
b(gh) = \pi_g(b(h)) + b(g),
\end{equation*}
for all $g,h\in G$. It is essentially a consequence of the Mazur--Ulam
theorem that any isometric action $G\curvearrowright^T \mathcal{H}$ may be
written as $T_g(\xi) = \pi_g(\xi) + b(g)$ for some orthogonal representation 
$\pi$ and an associated $1$-cocycle $b(g)$ and conversely. The
representation $\pi$ is known as the \emph{linear part} of $T$.

\begin{definition}
An isometric action $G\curvearrowright^T \mathcal{H}$ is said to admit \emph{%
almost fixed points} if there exists a sequence $(\xi_n)$ of vectors in $%
\mathcal{H}$ such that 
\begin{equation*}
\nor{T_g(\xi_n) - \xi_n}\to 0
\end{equation*}
for all $g\in G$.
\end{definition}

\begin{definition} We will say that a $1$-cocycle $b$ associated to an orthogonal representation $\pi: G\to \mathscr O(\Cal H)$ is \emph{almost inner} if the associated affine isometric action $G\ca \Cal H$ admits amost fixed points.
\end{definition}

\subsection{Geometric group theory}

\label{sub:poly-growth} Throughout the paper $G$ will be a countable
discrete group, often finitely generated. Recall that a \textit{length
function} $\lvert \,\cdot\,\rvert: G\to \mathbb{R}_{\geq 0}$ is a map
satisfying: (1) $\lvert g\rvert = 0$ if and only if $g = e$ is the identity;
(2) $\lvert g^{{\scriptscriptstyle {-1}}}\rvert = \lvert g\rvert$, for all $%
g\in G$; and (3) $\lvert gh\rvert \leq \lvert g\rvert + \lvert h\rvert$, for
all $g,h\in G$. A length function is proper if the map $g\mapsto \lvert
g\rvert$ is proper, i.e., all sets of bounded length are finite. If $\lvert
\,\cdot\,\rvert$ is a length function, then we denote 
\begin{equation*}
B(n) = \{g\in G : \lvert g\rvert\leq n\},
\end{equation*}
the \emph{ball of radius $n$} centered at the identity, and 
\begin{equation*}
S(n) = \{g\in G : \lvert g\rvert = n\},
\end{equation*}
the \emph{sphere of radius $n$} centered at the identity. If $G$ is
generated by a finite set $S$, then the function which assigns to each $g\in
G$ the least integer $k$ such that $g$ can be written as a product of $k$
elements from $S\cup S^{{\scriptscriptstyle {-1}}}$ is a proper length
function, known as a \emph{word length function}.

\begin{notation}
Let $G$ be a finitely generated, discrete group with a fixed finite,
symmetric, generating set $S$. Let $F\subset G$ be a finite subset. We set 
\begin{equation*}
\partial F := \bigcup_{g\in S} g F\Delta F,
\end{equation*}
where ``$\Delta$'' denotes the symmetric difference.
\end{notation}

\begin{definition}
\label{defn:Folner-seq} A sequence $(F_n)_{n\in\mathbb{N}}$ of finite
subsets of $G$ is said to form a \emph{F\o lner sequence} if 
\begin{equation*}
\frac{\lvert gF_n\Delta F_n\rvert}{\lvert F_n\rvert}\to 0
\end{equation*}
for all $g\in G$.
\end{definition}

\begin{definition}
\label{defn:contr-Folner} Let $G$ be a finitely generated, discrete group
with a fixed finite, symmetric, generating set $S$. For a constant $K>0$, a sequence $(F_n)_{n\in%
\mathbb{N}}$ of finite subsets of $G$ is said to be a \emph{$K$-controlled F\o %
lner sequence} if 
\begin{equation*}
\frac{\lvert \partial F_n\rvert}{\lvert F_n\rvert}\leq \frac{K}{\diam F_n}%
,
\end{equation*}
where $\diam F_n$ is defined to be the least integer $m$ such that $%
F_n\subset B(m)$. The group admits a controlled F\o lner sequence if it admits a $K$-controlled F\o lner sequence for some $K$.
\end{definition}

\begin{definition}
A finitely generated group $G$ is said to have \emph{polynomial growth} if
for some (equivalently, for any) proper word length function we have that 
\begin{equation*}
\limsup_n \frac{\log \lvert B(n)\rvert}{\log n}<\infty.
\end{equation*}

\noindent The group $G$ is said to be of \emph{weak polynomial growth} if 
\begin{equation*}
\liminf_n \frac{\log \lvert B(n)\rvert}{\log n}<\infty
\end{equation*}
for any proper word length.
\end{definition}

The following observation is due to Shalom.

\begin{prop}[Shalom, Lemma 6.7.3 in \protect\cite{Shalom}]
\label{prop:poly-cf} If $G$ is a finitely generated group of polynomial
growth of degree $d$, then for any proper word length, there is a
subsequence $\mathcal{S}\subset \mathbb{N}$ such that the sequence of balls $%
(B(n))_{n\in \mathcal{S}}$ form a $K$-controlled F\o lner sequence for $K>
10d$.
\end{prop}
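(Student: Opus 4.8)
The plan is to reduce the $K$-controlled F\o lner condition for the balls $B(n)$ to an elementary estimate on the counting function $\beta(n) := \abs{B(n)}$, and then to extract the subsequence by a contradiction against polynomial growth. The first ingredient is a clean bound on the boundary. Since $S$ is symmetric and consists of elements of length $1$, I would check that for each $g\in S$ one has $gB(n)\Delta B(n)\subseteq S(n)\cup S(n+1)$: an element of $gB(n)\setminus B(n)$ is a product of $g$ with a point of $B(n)$ yet lies outside $B(n)$, so it has length exactly $n+1$, while a point $h\in B(n)\setminus gB(n)$ satisfies $\abs{g^{\inv}h}\ge n+1\ge \abs{h}$, forcing $\abs{h}=n$. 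Taking the union over $g\in S$ yields
\[
\abs{\partial B(n)}\le \abs{S(n)}+\abs{S(n+1)} = \beta(n+1)-\beta(n-1),
\]
with no dependence on $\abs{S}$. Moreover, for an infinite group all spheres are non-empty, so $\diam B(n)=n$. Hence the $K$-controlled F\o lner inequality for $B(n)$ is implied by $n\,(\beta(n+1)-\beta(n-1))/\beta(n)\le K$, and it suffices to produce infinitely many $n$ for which this holds.

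For the extraction I would argue by contradiction, supposing that $n\,(\beta(n+1)-\beta(n-1))/\beta(n)>K$ for all $n\ge N_0$. Since $\beta$ is non-decreasing, this gives the two-step recursion $\beta(n+1)>\beta(n-1)(1+K/n)$. Iterating along a fixed parity (taking $n=2j+1$ and writing $a_j=\beta(2j)$) and telescoping produces
\[
\beta(2k)=a_k \gtrsim \prod_{j=j_0}^{k-1}\Bigl(1+\tfrac{K}{2j+1}\Bigr),
\]
and, taking logarithms together with $\sum_{j\le k}\frac{1}{2j+1}=\tfrac12\log k+O(1)$, one obtains $\log\beta(n)\ge \tfrac{K}{2}\log n+O(1)$, i.e.\ $\beta(n)\gtrsim n^{K/2}$. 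Thus $\liminf_n \frac{\log\beta(n)}{\log n}\ge K/2$, which contradicts polynomial growth of degree $d$ as soon as $K/2>d$. Consequently the hypothesis $K>10d$ (indeed any $K>2d$) forces the inequality $n\,(\beta(n+1)-\beta(n-1))/\beta(n)\le K$ to hold for infinitely many $n$, and these indices constitute the desired subsequence $\mathcal{S}$.

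The only delicate point is the telescoping estimate: one must respect the parity coupling of the two-step recursion and track the logarithmic sum carefully enough to recover the exponent $K/2$ rather than a weaker constant. The boundary inclusion and the reduction to the growth inequality are routine. I also note that since the argument in fact works for every $K>2d$, the stated range $K>10d$ follows immediately, because the controlled F\o lner inequality $\abs{\partial F}/\abs{F}\le K/\diam F$ only weakens as $K$ increases.
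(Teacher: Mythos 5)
Your proposal is correct, but note that the paper itself contains no proof of this statement: it is imported verbatim as Shalom's Lemma 6.7.3 from \cite{Shalom}, so there is no internal argument to compare against. Your argument is a valid, self-contained proof in the standard pigeonhole style (and in the same spirit as Shalom's original one). The boundary inclusion $gB(n)\Delta B(n)\subseteq S(n)\cup S(n+1)$ checks out exactly as you say, giving $\abs{\partial B(n)}\leq \beta(n+1)-\beta(n-1)$ with no factor of $\abs{S}$ because $\partial$ is defined as a union; together with $\diam B(n)=n$ this correctly reduces the $K$-controlled F\o lner condition to $n\,(\beta(n+1)-\beta(n-1))/\beta(n)\leq K$. (Two small points you should make explicit: the identity $\diam B(n)=n$ needs $G$ infinite --- for finite $G$ the statement is trivial since $\partial B(n)=\emptyset$ for large $n$ --- and passing from the bound $\beta(2k)\gtrsim k^{K/2}$ along even radii to all radii uses monotonicity of $\beta$.) The contradiction step is sound: the negation gives $\beta(n+1)>\beta(n-1)(1+K/n)$ for all large $n$, the parity-respecting telescoping with $\log(1+x)\geq x-x^2/2$ and $\sum_{j\leq k}\tfrac{1}{2j+1}=\tfrac12\log k+O(1)$ yields $\liminf_n \log\beta(n)/\log n\geq K/2$, which is incompatible with growth of degree $d$ once $K>2d$. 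As you observe, this actually proves the sharper threshold $K>2d$, and the stated range $K>10d$ follows since the controlled F\o lner inequality only relaxes as $K$ increases; the larger constant in Shalom's formulation is an artifact of his bookkeeping, not a necessity of the argument.
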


In fact, a group $G$ which satisfies a doubling condition $\lvert
B(2n)\rvert\leq C\cdot \lvert B(n)\rvert$ for some subsequence admits a
controlled F\o lner sequence by an observation of Tessera, \cite{Tessera2},
Remark 4.10. Gromov's ``Regularity lemma'' (\cite{Gromov}, section 3) shows
that groups of weak polynomial growth have the doubling condition. By the
work of Tessera several large classes of groups are known to admit
controlled F\o lner sequences.

\begin{prop}[Tessera, Theorem 11 in \protect\cite{Tessera2}]\label{prop-cf}
The following classes of groups admit controlled F\o lner sequences:

\begin{enumerate}
\item polycyclic groups;

\item wreath products $D \wr \mathbb{Z}$ with $D$ finite;

\item semi-direct products $\mathbb{Z}[\frac{1}{mn}]\rtimes_{m/n} \mathbb{Z}$%
, with $m,n$ coprime and $\lvert mn\rvert\geq 2$.
\end{enumerate}
\end{prop}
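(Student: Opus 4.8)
The plan is to exhibit, for each of the three classes, an explicit Følner sequence $(F_n)$ for which the product $\tfrac{\abs{\del F_n}}{\abs{F_n}}\,\diam F_n$ stays bounded, which is exactly the content of Definition \ref{defn:contr-Folner}; concretely I aim for $\tfrac{\abs{\del F_n}}{\abs{F_n}}\lesssim 1/n$ together with $\diam F_n\asymp n$. It is cleanest to build the $F_n$ as Følner sets for \emph{right} translations and then pass to the inverses $F_n^{\inv}$: since $S$ is finite and symmetric and inversion is a volume- and diameter-preserving bijection with $\abs{Fs\Delta F}=\abs{s^{\inv}F^{\inv}\Delta F^{\inv}}$, a controlled right-Følner sequence produces a controlled (left-)Følner sequence in the sense of the excerpt. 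When $G$ has polynomial growth the balls $B(n)$ already work along a suitable subsequence, which is precisely Proposition \ref{prop:poly-cf}; this disposes of the polycyclic groups of polynomial growth, so the genuinely hard examples are those of exponential growth.

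In every exponential case the group fibers as a semidirect product $G=N\rtimes\mathbb{Z}$ with $N$ amenable and the generator of $\mathbb{Z}$ acting by an automorphism $\phi$; writing each element uniquely as $(x,k)$ with $x\in N$, $k\in\mathbb{Z}$, and letting $h(x,k)=k$ be the height, I would take \emph{slabs}
\[ F_n=\{\,(x,k): x\in W_n,\ 0\le k\le n\,\}, \]
where $W_n\subset N$ is a finite window that is almost invariant under the generators of $N$ and under all the conjugates $\phi^k$ with $0\le k\le n$. The boundary then splits in two. In the height direction the generator of $\mathbb{Z}$ shifts $h$ by $\pm1$ and so moves only the extreme slices $\{h=0\}$ and $\{h=n\}$ out of $F_n$; as $F_n$ is a union of $n+1$ congruent slices this contributes $\asymp\abs{F_n}/n$, which is the source of the $1/n$ decay. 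In the $N$ direction a generator $s=(s_0,0)$ translates the $N$-coordinate of a height-$k$ element by $\phi^k(s_0)$, so the defect is $\sum_{k=0}^{n}\abs{\{x\in W_n: x\,\phi^k(s_0)\notin W_n\}}$, and the aim is to choose $W_n$ so that this total is $\lesssim\abs{W_n}\asymp\abs{F_n}/n$; because $\phi^k$ expands at a bounded exponential rate the successive defects form a convergent geometric series in $k$ and sum to $O(\abs{W_n})$. Finally $\diam F_n\asymp n$: the lower bound is clear from $t^n\in F_n$, while the upper bound uses that the exponentially many elements of $W_n$ all have word length $O(n)$ — the exponential distortion of $N$ in $G$ — so the whole slab is swept out by walks of length $O(n)$, and it is this upper bound that the controlled condition really needs.

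Carrying this out concretely: for the lamplighter $D\wr\mathbb{Z}$ the normal part $N=\bigoplus_{\mathbb{Z}}D$ is locally finite, $\phi$ is the shift, and $W_n$ is the set of configurations supported in $[0,n]$; right multiplication by a $D$-generator toggles a lamp at the current cursor and leaves the support inside $[0,n]$, so the $N$-defect is in fact zero and only the height caps contribute. For $\mathbb{Z}[\tfrac1{mn}]\rtimes_{m/n}\mathbb{Z}$ one takes $N=\mathbb{Z}[\tfrac1{mn}]$ with $\phi(x)=\tfrac mn x$ and for $W_n$ the elements whose $\abs{mn}$-adic digits lie in positions $[-n,n]$; adding the generator perturbs $x$ by an amount small compared with the window scale $(m/n)^n$ except on an exponentially thin shell, and summing over heights gives the geometric bound above. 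For a general polycyclic $G$ I would induct on the length of a cyclic series, realizing $G$ as a cocompact lattice in a simply connected solvable Lie group (Mostow) so that $N$ is finitely generated and $\phi$ is the restriction of a Lie automorphism; the same slab with $W_n$ a ball of the appropriate exponential radius in $N$ works, the nilpotent directions contributing only polynomial corrections.

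The main obstacle is the $N$-direction boundary estimate under the expanding conjugation, i.e.\ choosing $W_n$ that is simultaneously almost invariant under the generators of $N$, compatible with every conjugate $\phi^k$ for $k\in[0,n]$, and of word-diameter $O(n)$. For the abelian and locally finite normal parts this is the elementary geometric-series computation indicated above. For general polycyclic $G$, however, $\phi$ may have eigenvalues off the unit circle together with nontrivial Jordan blocks or rotational parts, so that $\phi^k$ distorts $W_n$ anisotropically; controlling the cumulative boundary over the full range of heights then forces one to adapt $W_n$ to the filtration of $N$ by the magnitudes of the eigenvalues of $\phi$. This is the delicate point, and the reason the polycyclic case genuinely requires the structure theory rather than a soft growth argument.
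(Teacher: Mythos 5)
The paper itself contains no proof of this proposition: it is imported verbatim from Tessera (Theorem 11 in \cite{Tessera2}), as the attribution indicates, so your attempt can only be judged as a reconstruction of Tessera's result. Your general mechanism is the right one, and cases (2) and (3) are essentially correct as sketched. The right-F\o lner slab $F_n=\{(x,k): x\in W_n,\ 0\le k\le n\}$ converted to a left-F\o lner set by inversion is a legitimate and in fact necessary move (for the lamplighter, left translation by the shift generator moves a \emph{constant} fraction of the slab out, so the inversion trick is doing real work); the identity $\abs{Fs\Delta F}=\abs{s^{\inv}F^{\inv}\Delta F^{\inv}}$ together with symmetry of word length, hence invariance of $\diam$, makes the reduction sound. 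For $D\wr\mathbb{Z}$ the right action of a lamp generator toggles the lamp at the cursor, so the lamp-direction defect is genuinely zero and only the two extreme height slices contribute, giving $\abs{\partial F_n}/\abs{F_n}\lesssim 1/n$ with $\diam F_n\asymp n$. For $\mathbb{Z}[\frac{1}{mn}]\rtimes_{m/n}\mathbb{Z}$, a window of denominator $(mn)^n$ and size $(mn)^{2n}$ has defect proportion $\asymp (mn)^{k-n}$ at height $k$, summing geometrically to $O(1)$ as you say, and the Horner-type distortion estimate gives word length $O(n)$ throughout the slab. Your dichotomy for polycyclic groups (polynomial growth handled by Proposition \ref{prop:poly-cf}, exponential growth remaining) is also legitimate, since polycyclic groups have polynomial or exponential growth.

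The genuine gap is case (1), which you concede in your final paragraph. Your induction on a cyclic series does not close, because the inductive hypothesis---that $N$ admits a controlled F\o lner sequence---is strictly weaker than what the slab construction consumes: you need windows $W_n\subset N$ that are \emph{simultaneously} (i) almost invariant under the generators of $N$ with total defect $O(\abs{W_n})$, (ii) almost invariant under every conjugate $\phi^k$, $0\le k\le n$, with defects summable in $k$, and (iii) of word diameter $O(n)$ inside $G$. When $N$ is nonabelian and $\phi$ mixes eigenvalues of different moduli with unipotent blocks or rotations, your assertion that ``the same slab with $W_n$ a ball of the appropriate exponential radius in $N$ works'' is false: balls fail (ii), and one must build anisotropic boxes adapted to the eigenvalue filtration and then re-verify (i) and (iii) for those boxes---which is exactly the content of Tessera's argument via the structure theory of lattices in simply connected solvable Lie groups, not a routine extension of your abelian computation. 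A secondary unaddressed point: Mostow's theorem only realizes a \emph{finite-index} subgroup of $G$ as such a lattice, and you never argue that controlled F\o lner sequences pass from a finite-index subgroup to $G$ (true, by thickening with a finite transversal, which changes $\abs{\partial F_n}/\abs{F_n}$ and $\diam F_n$ only by bounded factors, but it requires proof). So your proposal establishes (2) and (3) modulo routine estimates, while for (1) it is a strategy whose hard step is precisely the part left undone.
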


By results of Mal'cev and Auslander, it is known that a group $G$ is
polycyclic if and only if $G$ is realizable as a solvable subgroup of $\GL(n,%
\mathbb{Z})$, cf.\ \cite{de-la-Harpe}, section III.A.5.

The full extent of the class of amenable groups admitting a
controlled F\o lner sequence is unknown.  An
interesting problem would be to determine exactly which solvable groups with
finite Hirsch number belong to this class or at least have property H$_{%
\text{{\tiny FD}}}$. (To recall, let $G$ be a solvable group with derived series $G > G^{(1)} > G^{(2)} >
\dotsb > G^{(n)} > G^{({n+1})} = \{1\}$. The \emph{Hirsch number} is then
defined to be the sum of the torsion-free ranks of the abelian groups $%
G^{(i)}/G^{(i+1)}$, $i = 1,\dots,n$. See section 6.6 in \cite{Shalom} for a
discussion on this problem.) We pose the following,
more concrete question:

\begin{question}
If $\Gamma$ is a solvable subgroup of $\GL(n,\mathbb{Z}[\frac{1}{p}])$, does 
$\Gamma$ admit a controlled F\o lner sequence?
\end{question}

\noindent We remark that $\mathbb{Z}[\frac{1}{p}]$ can probably not be replaced with $%
\mathbb{Z}[\tau]$ for some non-algebraic number $\tau$, since $\GL(2,\mathbb{%
Z}[\tau])$ contains a copy of $\mathbb{Z}\wr \mathbb{Z}$ which ought not to admit a controlled F\o lner sequence.

\subsection{Arrays}

The definition of an array was formally introduced in \cite{CS} as a means for unifying the concepts
of length functions and $1$-cocycles into orthogonal representations. Arrays
fit naturally into the theory of the large scale geometry of discrete
groups, being closely related to in particular Guoliang Yu's Property A,
cf.\ \cite{Roe}, and Ozawa's class $\mathcal{S}$, cf. \cite{BrOz}. (See section 1 of \cite{CS} for an in-depth
discussion of the relationship between arrays with values into the
left-regular representation and ``negative curvature'' in geometric group
theory.) We now recall the definition.

\begin{definition}
\label{def:array} Let $\pi: G\to \mathscr O( \mathcal{H})$ be an
orthogonal representation of a countable discrete group $G$. A map $\alpha:
G\to \mathcal{H}$ is called an \textit{array} if for every finite subset 
$F\subset G$ there exists $K\geq 0$ such that 
\begin{equation}  \label{weakquasi}
\nor{\pi_g(\al(h)) - \al(gh)}\leq K,
\end{equation}
for all $g\in F$, $h\in G$ (i.e., $\alpha$ is \textit{boundedly equivariant}%
). It is an easy exercise to show that for any array $\alpha$ on a finitely
generated group $G$ there exists a proper word length function on $G$, a
scalar multiple of which bounds $\nor{\al(g)}$ from above.
\end{definition}

\begin{lem}
\label{lem:tensor} Let $G$ be a finitely generated group equipped with some
proper word length associated to a finite, symmetric, generating set $S$. If 
$\alpha: G\to \mathscr O(\mathcal{H})$ is an array into an orthogonal
representation $\pi$, then $\widetilde\alpha(g) := \frac{1}{\lvert
g\rvert}\ \alpha(g)\otimes\alpha(g)$, with $\widetilde\alpha(e) := 0$,
is an array into $\pi\otimes\pi$.
\end{lem}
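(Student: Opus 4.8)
The plan is to verify the bounded-equivariance inequality \eqref{weakquasi} for $\widetilde\alpha$ directly from the corresponding inequality for $\alpha$. Fix a finite subset $F\subset G$; by hypothesis there is $K\geq 0$ with $\nor{\pi_g(\al(h)) - \al(gh)}\leq K$ for all $g\in F$, $h\in G$. Since $G$ is finitely generated, the array $\alpha$ satisfies $\nor{\al(h)}\leq L\abs{h}$ for some constant $L>0$ (as noted at the end of Definition \ref{def:array}). First I would write the difference $(\pi\otimes\pi)_g(\widetilde\al(h)) - \widetilde\al(gh)$ and decompose it so as to separate the contribution of the scalar coefficients $\tfrac{1}{\abs{h}}$ versus $\tfrac{1}{\abs{gh}}$ from the contribution of the tensor $\al(h)\otimes\al(h)$ versus $\al(gh)\otimes\al(gh)$.

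Concretely, using $(\pi\otimes\pi)_g = \pi_g\otimes\pi_g$, the term $(\pi\otimes\pi)_g(\widetilde\al(h))$ equals $\tfrac{1}{\abs{h}}\,\pi_g(\al(h))\otimes\pi_g(\al(h))$, which I would then compare to $\widetilde\al(gh) = \tfrac{1}{\abs{gh}}\,\al(gh)\otimes\al(gh)$. I would insert and subtract intermediate terms to split the estimate into two pieces. The first piece controls the difference of the tensors $\pi_g(\al(h))\otimes\pi_g(\al(h)) - \al(gh)\otimes\al(gh)$ at a common coefficient: here the standard telescoping identity
\begin{equation*}
a\otimes a - c\otimes c = (a-c)\otimes a + c\otimes(a-c)
\end{equation*}
together with $\nor{a\otimes b}=\nor{a}\nor{b}$ reduces everything to $\nor{\pi_g(\al(h))-\al(gh)}\leq K$ multiplied by the norms $\nor{\pi_g(\al(h))}=\nor{\al(h)}\leq L\abs{h}$ and $\nor{\al(gh)}\leq L\abs{gh}$. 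The second piece controls the difference of coefficients $\bigl|\tfrac{1}{\abs{h}} - \tfrac{1}{\abs{gh}}\bigr|$ times $\nor{\al(gh)}^2 \leq L^2\abs{gh}^2$.

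For the coefficient estimate I would use the triangle inequality $\bigl|\abs{gh}-\abs{h}\bigr|\leq \abs{g}\leq \max_{g\in F}\abs{g} =: M$, so that $\bigl|\tfrac{1}{\abs{h}}-\tfrac{1}{\abs{gh}}\bigr| = \tfrac{\bigl|\abs{gh}-\abs{h}\bigr|}{\abs{h}\abs{gh}}\leq \tfrac{M}{\abs{h}\abs{gh}}$. Combining the two pieces, after dividing the tensor-difference piece by $\abs{h}$, one sees that each term carries a factor of roughly $\tfrac{1}{\abs{h}}$ or $\tfrac{\abs{gh}}{\abs{h}}$ against the quantities $K\abs{h}$, $KL\abs{gh}$, or $L^2\abs{gh}^2$; using again $\abs{gh}\leq \abs{h}+M$ one checks that these are all bounded by a constant depending only on $F$ (through $M$), $K$, and $L$, for $h$ ranging over the complement of a finite set. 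The main obstacle is purely bookkeeping: one must be careful that the coefficient difference $\tfrac{M}{\abs{h}\abs{gh}}$, when multiplied by $L^2\abs{gh}^2 = L^2\abs{gh}^2$, leaves a residual factor $\tfrac{\abs{gh}}{\abs{h}}\leq \tfrac{\abs{h}+M}{\abs{h}}$, which is bounded but does not tend to zero; this is fine since an array only requires a uniform bound, not decay. Finally, I would treat the finitely many $h$ for which $\abs{h}$ is small (including $h=e$, where $\widetilde\al(e)=0$ by definition) separately, absorbing them into the exceptional finite set allowed by the $O(\cdot)$ convention, and thereby conclude that $\widetilde\al$ is boundedly equivariant, i.e., an array into $\pi\otimes\pi$.
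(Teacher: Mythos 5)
Your argument is correct and follows essentially the same route as the paper's proof: the same telescoping decomposition $a\otimes a - c\otimes c = (a-c)\otimes a + c\otimes(a-c)$ for the tensor-difference piece (controlled by bounded equivariance and the linear bound $\lVert \alpha(h)\rVert \leq L\lvert h\rvert$), plus a coefficient-difference piece controlled by $\bigl|\lvert gh\rvert - \lvert h\rvert\bigr| \leq \lvert g\rvert$, with the finitely many exceptional points $h = e$ and $h = g^{\scriptscriptstyle{-1}}$ absorbed into the constant. The only cosmetic difference is that you factor out the common coefficient $\tfrac{1}{\lvert h\rvert}$ where the paper factors out $\tfrac{1}{\lvert gh\rvert}$, which is immaterial.
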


\begin{proof}
The proof is very similar to the proof of Proposition 1.4 of \cite{CS}: we
include it here only for the sake of completeness. First, for every $g\in G$%
, we denote by $B_g := \sup_{h\in G}\|\alpha(gh)-\pi_g(\alpha(h))\|$ and
from the assumptions we have $B_g<\infty$. Using the triangle inequality
together with the bounded equivariance property, for all $k\in G$ we have $%
\|\alpha(k)\|\leq D|k|$, where $D=\max_{s\in S}B_s$. This further implies
that for every $\ell\in G$ we have the following inequality 
\begin{equation}  \label{100123}
\begin{split}
\sup_{k\neq e,\ell^{{\scriptscriptstyle {-1}}}}\frac{\|\alpha(k)\|}{|\ell k|}
=\sup_{k\neq e,\ell^{{\scriptscriptstyle {-1}}}}\frac{\|\alpha(k)\|}{| k|}%
\cdot \frac{|k|}{|\ell k|} \leq D (|\ell|+1).
\end{split}%
\end{equation}

To check the bounded equivariance for $\widetilde\alpha$, we fix $g,h\in G$
where $h\neq e, g^{{\scriptscriptstyle {-1}}}$. Applying the triangle
inequality and using successively the bounded equivariance property, the
basic inequality $||gh|-|h|| \leq |g|$, and the inequality (\ref{100123}),
we have 
\begin{equation*}
\begin{split}
\|\widetilde\alpha(gh)-(\pi\otimes \pi)_g\widetilde\alpha(h)\|& \leq \frac{%
\|(\alpha(gh)-\pi_g\alpha(h))\otimes \alpha(gh)\|}{|gh|}+\frac{%
\|\pi_g\alpha(h)\otimes(\alpha(gh)-\pi_g\alpha(h))\|}{|gh|}+ \\
& \quad +\|\pi_g\alpha(h)\otimes\pi_g\alpha(h)\| \left |\frac {1}{|gh|}-%
\frac{1}{|h|}\right | \\
&\leq B_g \frac{\|\alpha(gh)\|}{|gh|}+B_g\frac{\|\alpha(h)\|}{|gh|}+
||gh|-|h|| \frac{\|\alpha(h)\|}{|h|}\frac{\|\alpha(h)\|}{|gh|} \\
& \leq B_gD(|g|+2)+ D^2|g| (|g|+1).
\end{split}%
\end{equation*}

\noindent This implies that for every $g,h \in G$ we have 
\begin{equation*}
\|\widetilde\alpha(gh)-(\pi\otimes \pi)_g\widetilde\alpha(h)\| \leq \max
\{B_gD(|g|+2)+ D^2|g| (|g|+1),\|\widetilde\alpha(g^{{\scriptscriptstyle {-1}}%
})\|, \|\widetilde\alpha(g)\|\},
\end{equation*}
which concludes our proof as the right hand expression depends only on $g$.
\end{proof}

\subsection{Large scale lipschitz maps}Let $\mathcal{V}$ be a normed vector space. We will say a map $f: G\to \mathcal{V}$
is \emph{large scale lipschitz} if there exists a map $C: G\to \mathbb{R}_{\geq 0}$ such
that for all $g\in G$, $\nor{f(g) - f(gs)}\leq C(s)$. An array can be viewed
in some sense as the formal ``adjoint'' of some large scale lipschitz map $f: G\to 
\mathcal{H}$ with respect to the representation $\pi$, viz.,

\begin{prop}
If $\alpha: G\to \mathcal{H}$ is an array associated to $\pi$, then $%
\alpha^\star(g) := \pi(g)\alpha(g^{\scriptscriptstyle {-1}})$ is large scale lipschitz. Conversely, if $f: G\to \mathcal{H}$ is large scale lipschitz, then $%
f^\star(g) := \pi(g)f(g^{\scriptscriptstyle {-1}})$ is an array
associated to $\pi$.
\end{prop}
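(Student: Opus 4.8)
The plan is to verify each implication by a direct computation, exploiting that the assignment $\beta \mapsto \beta^\star$, where $\beta^\star(g) = \pi_g\beta(g^\inv)$, is an involution: since $\pi$ is a homomorphism and each $\pi_g$ is orthogonal, one checks immediately that $(\beta^\star)^\star(g) = \pi_g\pi_{g^\inv}\beta(g) = \beta(g)$. Thus the two directions are genuinely dual, and in each case the key step is the same: use the isometry of the $\pi_g$'s to cancel a leading representation operator, thereby reducing a displacement of the transformed map to a displacement of the original.

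For the first implication I would fix $g,s\in G$ and compute
\[
\alpha^\star(g) - \alpha^\star(gs) = \pi_g\alpha(g^\inv) - \pi_{gs}\alpha(s^\inv g^\inv).
\]
Applying the isometry $\pi_{g^\inv}$ preserves the norm and yields $\alpha(g^\inv) - \pi_s\alpha(s^\inv g^\inv)$. Now I would invoke bounded equivariance for the one-element set $F=\{s\}$: with the associated constant $K_s$ and the argument $h = s^\inv g^\inv$, the defining inequality gives $\nor{\pi_s\alpha(s^\inv g^\inv) - \alpha(g^\inv)} \le K_s$. Hence $\nor{\alpha^\star(g) - \alpha^\star(gs)} \le K_s$, and setting $C(s) := K_s$ exhibits $\alpha^\star$ as large scale lipschitz.

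For the converse I would proceed symmetrically. Fixing $g,h\in G$,
\[
\pi_g f^\star(h) - f^\star(gh) = \pi_{gh}f(h^\inv) - \pi_{gh}f(h^\inv g^\inv) = \pi_{gh}\bigl(f(h^\inv) - f(h^\inv g^\inv)\bigr),
\]
so applying the isometry $\pi_{gh}$ gives $\nor{\pi_g f^\star(h) - f^\star(gh)} = \nor{f(h^\inv) - f(h^\inv g^\inv)}$. The large scale lipschitz hypothesis, applied to the element $h^\inv$ right-translated by $g^\inv$, bounds this by $C(g^\inv)$, a quantity independent of $h$. Given any finite $F\subset G$, taking $K := \max_{g\in F}C(g^\inv)$ then verifies bounded equivariance for $f^\star$, so $f^\star$ is an array.

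I expect no substantive obstacle here; the content is essentially bookkeeping, and the involutivity of $\star$ means the two halves are mirror images. The only point requiring care is the placement of inverses together with the order of quantifiers: one must check that, after the isometry cancellation, the residual displacement is controlled by the right-translating element alone---by $s$ in the first case and by $g^\inv$ in the second---so that the resulting bound depends only on the data permitted by the respective definitions (a function of $s$ for large scale lipschitz, a constant over the finite set $F$ for an array).
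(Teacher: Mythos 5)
Your proposal is correct and is exactly the ``straightforward check'' the paper invokes without writing out: the same cancellation of the leading representation operator by isometry, followed by an application of the defining inequality of the other notion, with the inverses landing where they should. Nothing to add.
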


\noindent The proof consists of a straightforward check that the respective identities are satisfied.

 Given a finite, symmetric generating set $S$ for $G$, for any map $f: G\to \bb R$ we define the variation function $\del f: G\to \bb R^{S}$ by $\del f(g)(s) := f(g) - f(gs)$.

\begin{definition}\label{defn:higson} A bounded function $f: G\to \bb R$ is said to be \emph{higson} if $\nor{\del f}\in C_0(G)$, where $\nor{\,\cdot\,}$ is the euclidean norm on $\bb R^S$.
\end{definition}

\noindent Note that if $f: G\to \Cal V$ is a large scale lipschitz map into a normed vector space $\Cal V$, then $g\mapsto \frac{1}{\abs{g}} f(g)$ is higson.

 We define
 $\mathscr H^\infty(G)$ to be Banach space of all higson functions. For all $1\leq p< \infty$, we also define $\mathscr H^p(G)$ to be the Banach space of all
 higson functions $f$ such that $\nor{\del f}\in \ell^p(G)$.
Note that the definition of $\mathscr H^p(G)$ for all $1\leq p\leq \infty$ does not depend on the choice of finite generating set.

Our interest in higson functions stems from the following ``rigidity'' phenomenon which can be observed under the assumption of ergodicity.
\begin{prop}\label{thm:h-rigid} If $f\in\mathscr H^\infty(G)$ is a function such that \[\int f(g^\inv) d\mu_n(g)\to 0\] for all Reiter sequences $(\mu_n)$, then $f\in C_0(G)$.
\end{prop}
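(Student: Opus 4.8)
The plan is to argue by contraposition: assuming $f\in\mathscr H^\infty(G)$ is \emph{not} in $C_0(G)$, I will manufacture a Reiter sequence along which $\int f(g^\inv)\,d\mu_n(g)$ stays bounded away from $0$, contradicting the hypothesis. Since the word length is proper, $f\notin C_0(G)$ means there are an $\e>0$ and a sequence $(g_k)$ with $\abs{g_k}\to\infty$ and $\abs{f(g_k)}\ge\e$ for all $k$; passing to a subsequence I may assume the values $f(g_k)$ all have one sign, say $f(g_k)\ge\e$ (the opposite sign is handled identically, reversing the inequalities below). Here $G$ is amenable---this is precisely what guarantees that any Reiter sequence exists at all---so I fix a F\o lner sequence $(F_m)$ and set $r_m:=\max_{x\in F_m}\abs{x}$, so that $F_m\subset B(r_m)$.

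The key construction uses \emph{right} translates of F\o lner sets. For elements $h_m\in G$ to be chosen, put $\nu_m:=\frac{1}{\abs{F_m}}\,\mb{1}_{F_m h_m}$. These are probability measures, and they form a Reiter sequence \emph{no matter how $h_m$ is chosen}: since $g\ast\nu_m=\frac{1}{\abs{F_m}}\mb{1}_{gF_m h_m}$ and right multiplication is a bijection preserving symmetric differences, one has $\nor{\nu_m-g\ast\nu_m}=\frac{\abs{F_m h_m\,\Delta\,gF_m h_m}}{\abs{F_m}}=\frac{\abs{gF_m\,\Delta\,F_m}}{\abs{F_m}}\to 0$ by the F\o lner condition. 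This freedom in placing $h_m$ is the crux of the argument.

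The higson hypothesis enters as a slow-variation estimate. Writing $\rho(M):=\sup_{\abs{y}\ge M}\nor{\del f(y)}$, the assumption $\nor{\del f}\in C_0(G)$ gives $\rho(M)\to 0$ as $M\to\infty$. Telescoping along a geodesic word $h=s_1\cdots s_p$ (so $p=\abs h$, $s_i\in S$) and using $\abs{f(g s_1\cdots s_{j-1})-f(gs_1\cdots s_j)}\le\nor{\del f(gs_1\cdots s_{j-1})}$ together with $\abs{gs_1\cdots s_{j-1}}\ge\abs g-\abs h$, one obtains $\abs{f(gh)-f(g)}\le\abs h\,\rho(\abs g-\abs h)$ for all $g,h\in G$. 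Now choose $h_m:=g_{k(m)}^\inv$ for an index $k(m)$ to be specified. Substituting $g=x h_m$ with $x\in F_m$ gives $\int f(g^\inv)\,d\nu_m(g)=\frac{1}{\abs{F_m}}\sum_{x\in F_m}f\!\left(g_{k(m)}\,x^\inv\right)$, and the estimate above (applied at the base point $g_{k(m)}$ with the element $x^\inv$, of length $\le r_m$) yields $f(g_{k(m)}x^\inv)\ge\e-r_m\,\rho(\abs{g_{k(m)}}-r_m)$ for every $x\in F_m$. For each fixed $m$ the radius $r_m$ is a constant, so since $\rho(M)\to 0$ I may choose $k(m)$ large enough that $r_m\,\rho(\abs{g_{k(m)}}-r_m)<\e/2$; averaging over $x$ then gives $\int f(g^\inv)\,d\nu_m\ge\e/2$ for every $m$. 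Thus this integral fails to converge to $0$ along the Reiter sequence $(\nu_m)$, the desired contradiction, so $f\in C_0(G)$.

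The step I expect to require the most care is the interplay in the last paragraph between the diameter $r_m$ of the F\o lner set and the distance $\abs{g_{k(m)}}$ at which it is placed: one must verify that the accumulated variation $r_m\,\rho(\abs{g_{k(m)}}-r_m)$ can indeed be driven below $\e/2$. This is where the argument really lives---reconciling the tension between needing $\nu_m$ spread out (to be Reiter) and needing $f(g^\inv)$ nearly constant on its support (to register the value $\e$). It is resolved precisely because the higson condition forces $f$ to vary arbitrarily slowly far from the identity, while right translation lets us push the support of $\nu_m$ as far out as we wish without ever disturbing the Reiter property.
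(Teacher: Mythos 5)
Your proof is correct and follows essentially the same route as the paper's: argue by contradiction, use the higson condition to make $f\geq \e/2$ on a whole translate $g_{k}F^{\inv}$ for $F$ a large F\o lner set, and take as Reiter sequence the uniform measures on the right-translated sets $F_m g_{k(m)}^{\inv}$, whose inverses land where $f$ is large. The only difference is that you spell out quantitatively (via the telescoping estimate $\abs{f(gh)-f(g)}\leq\abs{h}\,\rho(\abs{g}-\abs{h})$ and the explicit Reiter verification for right translates) what the paper's proof leaves as brief assertions.
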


\begin{proof} Suppose by contradiction that $f$ does not belong to $C_0(G)$. Without loss of generality, we would have that there would exist $c>0$ and a sequence $(g_n)$ of elements in $G$ such that $f(g_n)\geq c$ for all $n\in\bb N$. Since $f\in \mathscr H^\infty(G)$, for any finite subset $F\subset G$ there exists $n\in \bb N$ sufficiently large so that $f(h)\geq c/2$ for all $h\in g_n F$. Hence, passing to a subsequence of $(g_n)$, there is a F\o lner sequence $(F_k)$ with the property that $f(h)\geq c/2$ for all $h\in g_{n_k} F_k^\inv$ for all $k\in\bb N$. Taking $\mu_k$ to be the uniform probability measure on the set $F_k g_{n_k}^\inv$, we would then have constructed a Reiter sequence such that $\liminf_k\int f(g^\inv) d\mu_k(g)\geq c/2 >0$, a contradiction.
\end{proof}

\begin{definition}\label{def-growth} Let $G$ be an amenable group, and let $f: G\to \Cal V$ be a large scale lipchitz map. We say that $f$ has \emph{sublinear growth} if $\limsup_{\abs{g}\geq n} \nor{f(g)}/\abs{g}=0$. We say that $f$ has \emph{almost sublinear growth} if $\int \frac{1}{\abs{g}}\nor{f(g)} d\mu_n(g)\to 0$ for all Reiter sequences $(\mu_n)$.
\end{definition}

\begin{prop}\label{thm:sub-compact} Let $G$ be an amenable group. Let $f: G\to \Cal H$ be a  large scale lipschitz map in to Hilbert space. If $f$ is \emph{symmetric}, i.e., $\nor{f(g)}\equiv \nor{f(g^\inv)}$, then the following statements are equivalent:
\begin{enumerate} 
\item $f$ has sublinear growth;
\item $f$ has almost sublinear growth;
\item $f_\xi(g) := \ip{f(g)}{\xi}$ has sublinear growth for all $\xi\in \Cal H$ and the set $V := \Bigl\{\frac{1}{\abs{g}} f(g)\Bigr\}_{g\in G}$ is precompact;
\item $f_\xi$ has almost sublinear growth for all $\xi\in \Cal H$ and $V$ is precompact.
\end{enumerate}
\end{prop}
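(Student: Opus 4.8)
The plan is to route everything through two families of real-valued functions attached to $f$: the symmetric function $h(g):=\frac{1}{\abs g}\nor{f(g)}$ and, for each $\xi\in\Cal H$, the scalar function $h_\xi(g):=\frac{1}{\abs g}\abs{f_\xi(g)}$. Writing $v_g:=\frac{1}{\abs g}f(g)$ so that $V=\{v_g\}_{g\in G}$, sublinear growth of $f$ (resp.\ of $f_\xi$) is precisely the statement $h\in C_0(G)$ (resp.\ $h_\xi\in C_0(G)$), while almost sublinear growth is vanishing of the corresponding averages against every Reiter sequence. All of $h,h_\xi$ are bounded (by $D$ and $D\nor\xi$, where $\nor{f(g)}\le D\abs g$) and higson, the latter exactly as in the remark following Definition~\ref{defn:higson}. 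I will establish the cycle $(1)\Rightarrow(3)\Rightarrow(4)\Rightarrow(2)\Rightarrow(1)$. I may assume $G$ is infinite, the finite case being trivial, and I will use the elementary fact that on an infinite amenable group every Reiter sequence $(\mu_n)$ satisfies $\mu_n(F)\to0$ for each finite $F\subset G$; this follows from the translation-averaging device already used in the proof of Proposition~\ref{thm:h-rigid}, since for distinct $h_1,\dots,h_N$ one has $N\mu_n(\{g_0\})\le 1+\sum_i\nor{\mu_n-h_i\ast\mu_n}$, and $N$ is arbitrary. Combined with boundedness, this yields the ``easy'' principle that any bounded $u\in C_0(G)$ has $\int u\,d\mu_n\to0$ for every Reiter sequence.

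For $(1)\Rightarrow(3)$, Cauchy--Schwarz gives $h_\xi\le\nor\xi\,h$, so $h\in C_0(G)$ forces each $h_\xi\in C_0(G)$, i.e.\ each $f_\xi$ is sublinear; and since $\nor{v_g}=h(g)\to0$ as $\abs g\to\infty$, every sequence drawn from $V$ either meets only finitely many distinct values or has a subsequence tending to $0$, so $\overline V$ is compact. Then $(3)\Rightarrow(4)$ is immediate from the easy principle applied to each $h_\xi\in C_0(G)$, precompactness of $V$ being common to both statements. The implication $(2)\Rightarrow(1)$ is where symmetry enters: since $f$ is symmetric and $\abs g=\abs{g^\inv}$, the higson function $h$ is symmetric, so $\int h\,d\mu_n\to0$ is the same as $\int h(g^\inv)\,d\mu_n(g)\to0$, and Proposition~\ref{thm:h-rigid} gives $h\in C_0(G)$, i.e.\ $f$ is sublinear. (Without symmetry this step genuinely fails, which is why the hypothesis is imposed.)

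The crux will be $(4)\Rightarrow(2)$, where one must pass from the scalar averages of the $h_\xi$ to the vector average of $h$; here Cauchy--Schwarz points the wrong way, and it is precompactness of $V$ that repairs this. Fixing $\e>0$, I would choose by compactness of $K:=\overline V$ a finite $\e$-net $v_1,\dots,v_m\in K$ and set $\xi_i:=v_i/\nor{v_i}$ for $v_i\ne0$. A one-line estimate then shows that for every $v\in K$, choosing $v_i$ with $\nor{v-v_i}\le\e$,
\[\nor v\le\nor{v_i}+\e=\ip{v_i}{\xi_i}+\e\le\ip{v}{\xi_i}+2\e\le\max_i\abs{\ip{v}{\xi_i}}+2\e.\]
Applying this with $v=v_g$ and integrating against a Reiter sequence $(\mu_n)$,
\[\int\nor{v_g}\,d\mu_n\le\sum_{i=1}^m\int\abs{\ip{v_g}{\xi_i}}\,d\mu_n+2\e=\sum_{i=1}^m\int h_{\xi_i}\,d\mu_n+2\e,\]
and each of the finitely many terms on the right tends to $0$ by almost sublinear growth of $f_{\xi_i}$. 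Hence $\limsup_n\int h\,d\mu_n\le2\e$ for every $\e>0$, i.e.\ $f$ has almost sublinear growth, closing the cycle.

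I expect the finite-net step to be the main obstacle—not because it is technically deep, but because it is the only point where a hypothesis is used in an essential, non-formal way: precompactness of $V$ is exactly what allows finitely many functionals $\ip{\cdot}{\xi_i}$ to dominate $\nor{\cdot}$ uniformly on the relevant set, thereby converting the scalar conclusions of $(4)$ into the vector conclusion of $(2)$. The symmetry hypothesis plays the analogous indispensable role in $(2)\Rightarrow(1)$ via Proposition~\ref{thm:h-rigid}; every remaining implication is a formal consequence of Cauchy--Schwarz and the escape property of Reiter sequences.
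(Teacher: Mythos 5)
Your proof is correct and follows essentially the same route as the paper: the crux in both is (i) applying Proposition \ref{thm:h-rigid} to the symmetric higson function $\frac{1}{\abs{g}}\nor{f(g)}$ to pass from almost sublinear to sublinear growth, and (ii) using precompactness of $V$ to produce finitely many vectors $\xi_1,\dots,\xi_m$ whose coordinate functionals dominate the norm on $V$ up to $\e$, so that almost sublinearity of the $f_{\xi_i}$ forces almost sublinearity of $f$. The only (cosmetic) difference is that you bound $\nor{v}$ linearly via normalized net vectors, whereas the paper bounds $\nor{v}^2$ against unnormalized ones and then invokes Cauchy--Schwarz.
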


\begin{proof} The implications (1)$\Rightarrow$(2), (1)$\Rightarrow$(3), and (3)$\Rightarrow$(4) are trivial, while the implication (2)$\Rightarrow$(1) follows directly by Proposition \ref{thm:h-rigid} applied to the function $\frac{1}{\abs{g}}\nor{f(g)}$. Therefore, we only need prove the implication (4)$\Rightarrow$(1). 

To this end, note that if $V$ is precompact, then for any $\e>0$ we can find a set of vectors $\xi_1,\dotsc,\xi_n\in \Cal H$ so that \begin{equation}\int \bigl(\frac{1}{\abs{g}} \nor{f(g)}\bigr)^2 d\mu(g)\leq C \sum_{i=1}^n \int \frac{1}{\abs{g}}\abs{\ip{f(g)}{\xi_i}} d\mu(g) + \e\end{equation} holds for any probability measure $\mu$, where $C := \sup_{g\in G}\frac{\nor{f(g)}}{\abs{g}}<\infty$. Thus, by almost sublinear growth of each $f_\xi$ and the Cauchy--Schwarz inequality, we have that $\int \frac{1}{\abs{g}}\nor{f(g)} d\mu_n(g)\to 0$ along any Reiter sequence. By symmetry, the result then obtains by Proposition \ref{thm:h-rigid}.
\end{proof}

\section{Main Results}

\subsection{Arrays and the weak mean ergodic theorem}

\label{sec:arrays} In this section we present the proof of Theorem \ref%
{main:norm}. Though the theorem was stated explicitly for cocycles, the
natural context for the theorem is actually the class of arrays. This is
essentially due to the fact that there is no well-defined product of
cocycles, while such a product exists for the class of arrays. This allows
us to exploit the weak mixingness in order to derive the strong form of the
theorem in that case.

\begin{theorem}[Theorem \protect\ref{main:norm}]
\label{thm:arraymet} Let $\pi: G\to \mathscr O(\mathcal{H})$ be an
ergodic orthogonal representation of a finitely generated amenable group $G$%
, and let $\alpha: G\to \mathcal{H}$ be an array. Let $S$ be a finite,
symmetric, generating set for $G$, and let $\lvert \,\cdot\,\rvert$ denote
the word length in $S$. If $(\mu_n)_{n\in\mathbb{N}}$ is a Reiter sequence
for $G$, then 
\begin{equation}  \label{wmet}
\int \frac{1}{\lvert g\rvert}\,\alpha(g)\, d\mu_n(g)\to 0
\end{equation}
in the weak topology. If $\pi$ is weakly mixing, then 
\begin{equation}  \label{wmwmet}
\int \frac{1}{\lvert g\rvert}\,\lvert \langle \alpha(g), \xi \rangle\rvert\,
d\mu_n(g)\to 0
\end{equation}
for all $\xi\in \mathcal{H}$.
\end{theorem}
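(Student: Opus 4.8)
The plan is to prove the weak statement \eqref{wmet} by showing that every weak cluster point of the sequence $v_n \df \int \frac{1}{\abs{g}}\al(g)\,d\mu_n(g)$ is $\pi$-invariant, and hence zero by ergodicity. First note that the array bound $\nor{\al(g)}\leq D\abs{g}$ (with $D = \max_{s\in S}B_s$, as in the proof of Lemma \ref{lem:tensor}) gives $\nor{\frac{1}{\abs{g}}\al(g)}\leq D$, so that $\nor{v_n}\leq D$ for all $n$; being bounded in Hilbert space, $(v_n)$ is weakly precompact. Thus it suffices to prove $\nor{\pi_g v_n - v_n}\to 0$ for each fixed $g\in G$: granting this, any weak cluster point $v$ of $(v_n)$ satisfies $\pi_g v = v$ for all $g$ (since $\pi_g$ is weak--weak continuous), hence $v=0$ by ergodicity, and a bounded sequence with $0$ as its only weak cluster point converges weakly to $0$.

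The core estimate is this near-invariance. After the change of variables $k = gh$, the identity $\int F(gh)\,d\mu_n(h) = \int F(k)\,d(g\ast\mu_n)(k)$ lets me write
\begin{equation*}
\pi_g v_n - \int \tfrac{1}{\abs{k}}\al(k)\,d(g\ast\mu_n)(k) = \int \Bigl(\tfrac{1}{\abs{h}}\pi_g\al(h) - \tfrac{1}{\abs{gh}}\al(gh)\Bigr)\,d\mu_n(h).
\end{equation*}
Using bounded equivariance $\nor{\pi_g\al(h) - \al(gh)}\leq B_g$, the bound $\nor{\al(gh)}\leq D\abs{gh}$, and $\abs{\,\abs{gh} - \abs{h}\,}\leq \abs{g}$, the integrand on the right is bounded in norm by $(B_g + D\abs{g})\,\tfrac{1}{\abs{h}}$. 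Now two features of a Reiter sequence enter. First, since $\tfrac{1}{\abs{h}}\in C_0(G)$ and every weak-$\ast$ cluster point of $(\mu_n)$ is a left-invariant mean---which, as $G$ is infinite, annihilates finite sets and hence all of $C_0(G)$---one has $\int \tfrac{1}{\abs{h}}\,d\mu_n(h)\to 0$, forcing the displayed error term to vanish. Second, since $\Phi(h)\df \tfrac{1}{\abs{h}}\al(h)$ is bounded by $D$, the Reiter property gives $\nor{\int \Phi\,d(g\ast\mu_n) - \int \Phi\,d\mu_n}\leq D\,\nor{g\ast\mu_n - \mu_n}\to 0$. Combining the two yields $\nor{\pi_g v_n - v_n}\to 0$, which establishes \eqref{wmet}.

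For the weakly mixing statement \eqref{wmwmet}, I would pass to the product array $\widetilde\al(g) = \tfrac{1}{\abs{g}}\al(g)\otimes\al(g)$ of Lemma \ref{lem:tensor}, an array into $\pi\otimes\pi$. Since $\pi$ is weakly mixing, $\pi\otimes\pi$ is ergodic, so \eqref{wmet} applies to $\widetilde\al$; pairing $\int \tfrac{1}{\abs{g}}\widetilde\al(g)\,d\mu_n(g)\to 0$ with the vector $\xi\otimes\xi$ gives
\begin{equation*}
\int \frac{1}{\abs{g}^2}\,\abs{\ip{\al(g)}{\xi}}^2\,d\mu_n(g)\to 0.
\end{equation*}
As each $\mu_n$ is a probability measure, the Cauchy--Schwarz inequality bounds $\int \tfrac{1}{\abs{g}}\abs{\ip{\al(g)}{\xi}}\,d\mu_n(g)$ by the square root of the left-hand side above, which gives \eqref{wmwmet}.

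The step I expect to be the main obstacle---and the conceptual heart of the argument---is the passage $\int \tfrac{1}{\abs{h}}\,d\mu_n(h)\to 0$, i.e.\ the fact that Reiter sequences asymptotically annihilate $C_0(G)$. This is precisely the mechanism that absorbs the gap between bounded equivariance and genuine equivariance of the array: the $\tfrac{1}{\abs{h}}$ weight converts the $O(1)$ equivariance defect into a quantity concentrated, in measure, near the identity, where the Reiter averages carry no mass in the limit. Once this is isolated, the array behaves as if it were exactly equivariant, and the classical ``invariance plus ergodicity'' mechanism of the von Neumann mean ergodic theorem takes over.
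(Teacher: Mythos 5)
Your proposal is correct and follows essentially the same route as the paper: your decomposition of $\pi_g v_n - v_n$ into an equivariance-defect term (killed because $\frac{1}{\abs{\cdot}}\in C_0(G)$ and Reiter measures asymptotically annihilate finite sets) plus a term controlled by $\nor{\mu_n - g\ast\mu_n}_1$ is exactly the paper's estimate \eqref{eq:almost} in dual form---pairing $v_n$ against the vectors $(1-\pi(g^{\inv}))\eta$ uniformly in $\eta$ is the same as bounding $\nor{v_n - \pi_g v_n}$---and your treatment of the weakly mixing case via the tensor array $\widetilde\alpha$ of Lemma \ref{lem:tensor} and Cauchy--Schwarz is verbatim the paper's. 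The only difference is the finish: the paper deduces weak convergence by extending the estimate over the span of coboundary vectors, which is dense by ergodicity, whereas you use weak compactness and the fact that invariant cluster points vanish; these are dual uses of the same ergodicity hypothesis.
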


Before we begin the proof, we pause to introduce some convenient notation to
be used here as well as in the sequel.

\begin{notation}
\label{not:array} Let $\alpha:G\to \mathcal{H}$ be an array. We set 
\begin{equation*}
\alpha^\flat(g) = \frac{1}{\lvert g\rvert}\ \alpha(g),
\end{equation*}
where by convention $\alpha^\flat(e) = 0$. $\mathcal{H}\otimes \mathcal{H}$
will be denoted as $\widetilde{\mathcal{H}}$. The representation $%
\pi\otimes\pi: G\to \mathscr O(\widetilde{\mathcal{H}})$ will be denoted as 
$\widetilde\pi$. The array $\widetilde\alpha: G\to \widetilde{\mathcal{H}}$
is defined as 
\begin{equation*}
\widetilde\alpha(g) = \frac{1}{\lvert g\rvert}\ \alpha(g)\otimes\alpha(g),
\end{equation*}
where $\widetilde\alpha(e) = 0$ by convention.
\end{notation}

\begin{proof}[Proof of Theorem \ref{thm:arraymet}]
The proofs of these formulas are inspired by the standard approach to the
(weak) mean ergodic theorem for amenable groups. We
begin by proving (\ref{wmet}). To this end, we fix $\epsilon>0$, $n\in\mathbb{N%
}$ and note that there exists a finite subset $F_n\subset G$ such that 
\begin{equation*}
\nor{\al^\f(gh) - \pi(g)\al^\f(h)}\leq \epsilon
\end{equation*}
whenever $g\in B(n)$ and $h\in G\setminus F_n$. Let $\xi\in \mathcal{H}$ be
a vector of the form $\xi = (1 - \pi(g^{{\scriptscriptstyle {-1}}}))\eta$
for some $g\in B(n)$, $\eta\in \mathcal{H}$. We then have that 
\begin{equation}  \label{eq:almost}
\begin{split}
&\bbs{\int \ip{\al^\f(h)}{\xi}\, d\mu_N(h)} \\
&= \bbs{\int \ip{\al^\f(h) - \pi(g)\al^\f(h)}{\eta}\, d\mu_N(h)} \\
&\leq \bbs{\int \ip{\al^\f(gh) - \pi(g)\al^\f(h)}{\eta}\, d\mu_N(h)} + \int
\lvert \langle \alpha^\flat(k), \eta \rangle\rvert \,d\lvert \mu_N(g^{{%
\scriptscriptstyle {-1}}}k) - \mu_N(k)\rvert \\
&\leq \nor{\eta} \int \nor{\al^\f(gh) - \pi(g)\al^\f(h)}\, d\mu_N(h) + \sup_k%
\nor{\al^\f(k)}\cdot\nor{\mu_N - g\ast\mu_N}_1\lesssim 2\nor{\eta}\epsilon,
\end{split}%
\end{equation}
since $\lim_N \mu_N(F_n)= 0$ and $\nor{\al^\f}$ is bounded. By inspection,
the estimate holds for the span $\mathcal{V }:= span\{ \xi : \exists
g\in G,\eta\in \mathcal{H}(\xi = (1-\pi(g))\eta)\}$, establishing the
theorem in that case. Since $\int \nor{\al^\f(g)}\, d\upsilon_n(g)$ is
uniformly bounded, the result then extends to the closure of $\mathcal{V}$,
which by ergodicity is all of $\mathcal{H}$. This concludes the proof of %
(\ref{wmet}).

For the proof of the second part, formula (\ref{wmwmet}), we note that if $%
\alpha: G\to \mathcal{H}$ is an array for $\pi$, then $\widetilde\alpha(g)$
is an array for $\widetilde\pi$ by Lemma \ref{lem:tensor}. Applying this, we
see that 
\begin{equation}
\bbs{\int \ip{\widetilde\al^\f(h)}{\xi\otimes\xi}\, d\mu_N(h)} = \int \lvert
\langle \alpha^\flat(h), \xi \rangle\rvert^2\, d\mu_N(h)\to 0
\end{equation}
by the proof of (\ref{wmet}). By the Cauchy--Schwarz inequality, we have that 
\begin{equation}
\int \lvert \langle \alpha^\flat(h), \xi \rangle\rvert\, d\mu_N(h)\leq \Bigl(%
\int \lvert \langle \alpha^\flat(h), \xi \rangle\rvert^2\, d\mu_N(h)\Bigr)%
^{1/2},
\end{equation}
and we are done.
\end{proof}

In the case the $1$-cocycle is proper, there is a sharpening of the above result. The proof is identical the the proof of the previous theorem, using Proposition 1.4 from \cite{CS} instead of Lemma \ref{lem:tensor}.

\begin{prop} Let $\pi: G\to \Cal H$ be a weakly mixing orthogonal representation. If $b: G\to \Cal H$ is a proper $1$-cocycle, then \begin{equation} \int \frac{1}{\nor{b(g)}} \abs{\ip{b(g)}{\xi}} d\mu_n(g)\to 0 \end{equation} for all Reiter sequences $(\mu_n)$.
\end{prop}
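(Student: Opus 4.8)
The plan is to imitate the proof of the weakly mixing case of Theorem \ref{thm:arraymet}, but with the word-length normalization $\alpha^\flat(g) = \frac{1}{\abs{g}}\alpha(g)$ replaced by the \emph{self-normalization} $b^\sharp(g) := \frac{1}{\nor{b(g)}}\,b(g)$ (set equal to $0$ on the finite set where $b$ vanishes), and with Lemma \ref{lem:tensor} replaced by its $\nor{\,\cdot\,}$-normalized analog, Proposition 1.4 of \cite{CS}. Since $\frac{1}{\nor{b(g)}}\abs{\ip{b(g)}{\xi}} = \abs{\ip{b^\sharp(g)}{\xi}}$, the goal becomes $\int \abs{\ip{b^\sharp(h)}{\xi}}\, d\mu_n(h)\to 0$.

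First I would check that self-normalization still produces the asymptotic equivariance that drives the estimate (\ref{eq:almost}). Using the cocycle identity in the exact form $\nor{b(gh)-\pi_g b(h)} = \nor{b(g)}$ and expanding
\[
b^\sharp(gh) - \pi_g b^\sharp(h) = \frac{b(gh)-\pi_g b(h)}{\nor{b(gh)}} + \pi_g b(h)\Bigl(\frac{1}{\nor{b(gh)}} - \frac{1}{\nor{b(h)}}\Bigr),
\]
together with $\nor{\pi_g b(h)}=\nor{b(h)}$ and $\bigl|\nor{b(gh)}-\nor{b(h)}\bigr|\le \nor{b(g)}$, one bounds the left-hand side by $2\nor{b(g)}/\nor{b(gh)}$. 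For fixed $n$ the quantity $\sup_{g\in B(n)}\nor{b(g)}$ is finite, while properness of $b$ forces $\nor{b(gh)}\to\infty$ as $h\to\infty$ uniformly over $g\in B(n)$ off a finite set $F_n$; hence $\nor{b^\sharp(gh)-\pi_g b^\sharp(h)}\le\epsilon$ for $g\in B(n)$, $h\notin F_n$. As $\nor{b^\sharp}\le 1$ is also bounded, the computation (\ref{eq:almost}) applies verbatim with $b^\sharp$ in place of $\alpha^\flat$. This shows that for \emph{any} proper array into an ergodic representation the self-normalized averages converge weakly to $0$; in particular, since weak mixing of $\pi$ implies ergodicity, $\int b^\sharp(h)\, d\mu_n(h)\to 0$ weakly.

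Next I would pass to the tensor. By Proposition 1.4 of \cite{CS}, the map $\widehat b(g) := \frac{1}{\nor{b(g)}}\,b(g)\otimes b(g)$ is an array for $\widetilde\pi=\pi\otimes\pi$; it satisfies $\nor{\widehat b(g)}=\nor{b(g)}\to\infty$, so it is again proper, and $\widetilde\pi$ is ergodic because $\pi$ is weakly mixing. Applying the previous paragraph to $\widehat b$ and testing the weak limit against $\xi\otimes\xi$ gives
\[
\int \ip{\widehat b^{\,\sharp}(h)}{\xi\otimes\xi}\, d\mu_n(h) = \int \frac{1}{\nor{b(h)}^2}\,\ip{b(h)}{\xi}^2\, d\mu_n(h) = \int \abs{\ip{b^\sharp(h)}{\xi}}^2\, d\mu_n(h)\to 0,
\]
where the first equality uses the key identity $\nor{\widehat b(g)}=\nor{b(g)}$, which forces $\widehat b^{\,\sharp}(g)=\frac{1}{\nor{b(g)}^2}\,b(g)\otimes b(g)$ and thus yields the exponent $\nor{b(h)}^2$ in the denominator. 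A final application of the Cauchy--Schwarz inequality, $\int \abs{\ip{b^\sharp(h)}{\xi}}\, d\mu_n \le \bigl(\int \abs{\ip{b^\sharp(h)}{\xi}}^2\, d\mu_n\bigr)^{1/2}$, then delivers the claim.

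The one genuinely new point---and the step I expect to be the main obstacle---is verifying that self-normalization by $\nor{b(g)}$ rather than by the word length $\abs{g}$ still produces the asymptotic equivariance used in (\ref{eq:almost}). This is precisely where properness of $b$ is indispensable, since it is what guarantees $\nor{b(gh)}\to\infty$ uniformly over $g$ in a ball; without it the bound $2\nor{b(g)}/\nor{b(gh)}$ need not tend to $0$. Everything else is a transcription of the proof of Theorem \ref{thm:arraymet}, with the identity $\nor{\widehat b(g)}=\nor{b(g)}$ arranging the powers to match the desired denominator $\nor{b(h)}^2$.
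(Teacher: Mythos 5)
Your proposal is correct and is essentially the paper's own proof: the paper disposes of this proposition in one line, declaring it ``identical to the proof of the previous theorem, using Proposition 1.4 from \cite{CS} instead of Lemma \ref{lem:tensor},'' which is precisely the substitution you carry out (self-normalization $b^\sharp$ in place of $\alpha^\flat$, with the tensor array $\frac{1}{\nor{b(g)}}\,b(g)\otimes b(g)$ feeding the same Cauchy--Schwarz endgame). The details you supply---the bound $\nor{b^\sharp(gh)-\pi_g b^\sharp(h)}\leq 2\nor{b(g)}/\nor{b(gh)}$ from the cocycle identity, and properness giving the uniform escape of $\nor{b(gh)}$ over $g\in B(n)$---are exactly the points the paper leaves implicit, and you identify correctly that properness is what replaces the properness of the word length in the original argument.
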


\subsection{Theorem \protect\ref{main:controlled} and the mean ergodic theorem}

We begin with the main technical theorem in this section, the formulation and proof of which are inspired by Lemma 3.4 in \cite{CTV}.

\begin{theorem}\label{thm:w-sublinear}
Let $G$ be a finitely generated, discrete group in the class $\Cal{CF}$. Let $b:G\to \mathcal{H}$ be a $1$-cocycle associated to an orthogonal representation $\pi$.
Assume that \begin{equation}\label{eq:c0} \frac{1}{\abs{g}}\ip{b(g)}{\xi}\in C_0(G)\end{equation} for all $\xi\in \Cal H$ (i.e., $b$ is weakly sublinear). Let $(F_n)_{n\in\mathbb{N}}$
be a $K$-controlled F\o lner sequence. Let $\upsilon_n$ be the uniform
measure on $F(n)$. There exists a sequence $(\mu_{k})$ of finitely supported
measures $\mu_k\in co\{\upsilon_n : n\in \mathbb{N}\}$ such that $\xi_k
:= \int b(g) \,d\mu_{k}(g)$ form a sequence of almost fixed points for
the affine action $G\curvearrowright^T \mathcal{H}$ associated to $b$.
\end{theorem}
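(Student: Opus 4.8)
The plan is to produce the almost fixed points directly as averages of the cocycle over the controlled F\o lner sets, correcting them afterwards by passing to convex combinations. The starting point is the elementary computation that, writing $\eta_n \df \int b\,d\upsilon_n = \frac{1}{\abs{F_n}}\sum_{h\in F_n}b(h)$ and using the cocycle identity $b(gh)=\pi_g b(h)+b(g)$, the affine action $T_g\xi = \pi_g\xi + b(g)$ satisfies
\[ T_g\eta_n - \eta_n = \frac{1}{\abs{F_n}}\Bigl(\sum_{h\in gF_n} b(h) - \sum_{h\in F_n} b(h)\Bigr), \]
an expression supported on the symmetric difference $gF_n\Delta F_n$. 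Because $T_g$ is affine it preserves convex combinations, so the same formula holds verbatim with $\eta_n$ replaced by any $\xi = \sum_j\lambda_j\eta_{n_j}$ and $\upsilon_n$ by $\mu=\sum_j\lambda_j\upsilon_{n_j}\in\co\{\upsilon_n\}$. Thus it suffices to arrange $\nor{T_s\xi_k - \xi_k}\to 0$ for $s$ in the generating set $S$; the passage to arbitrary $g\in G$ is then free, since $T$ is a homomorphism into isometries and hence $\nor{T_g\xi - \xi}\leq \abs{g}\max_{s\in S}\nor{T_s\xi - \xi}$ by a telescoping estimate.

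The crux is the assertion that $T_g\eta_n - \eta_n\to 0$ \emph{weakly} for each fixed $g$. To prove it, fix $\xi\in\Cal H$ and $\e>0$. Weak sublinearity (hypothesis (\ref{eq:c0})) tells us $\frac{1}{\abs{h}}\ip{b(h)}{\xi}\in C_0(G)$, so the set $E\df\{h: \abs{\ip{b(h)}{\xi}}\geq \e\abs{h}\}$ is finite. Splitting the sum over $gF_n\Delta F_n$, the contribution of $E$ is $O(1/\abs{F_n})$ and vanishes, while on the complement I estimate $\abs{\ip{b(h)}{\xi}}<\e\abs{h}\leq \e(\abs{g}+\diam F_n)$ and invoke the two boundary estimates $\abs{gF_n\Delta F_n}\leq \abs{g}\,\abs{\partial F_n}$ and $\abs{\partial F_n}/\abs{F_n}\leq K/\diam F_n$. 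This yields $\limsup_n\abs{\ip{T_g\eta_n-\eta_n}{\xi}}\leq \e\abs{g}K$, and letting $\e\to 0$ proves the weak convergence. This is precisely the step that consumes the controlled F\o lner hypothesis: it cancels the factor $\diam F_n$ coming from the word lengths of boundary elements against the $C_0$-decay of $\frac{1}{\abs{h}}\ip{b(h)}{\xi}$.

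To conclude I would upgrade weak to norm convergence by convexity. In the finite Hilbert direct sum $\Cal K\df \bigoplus_{s\in S}\Cal H$ the vectors $W_n\df (T_s\eta_n-\eta_n)_{s\in S}$ converge weakly to $0$, so by Mazur's theorem $0$ lies in the norm closure of $\co\{W_n\}$. Selecting, for each $k$, a convex combination within distance $1/k$ of the origin yields measures $\mu_k\in\co\{\upsilon_n\}$ whose averages $\xi_k=\int b\,d\mu_k$ satisfy $\max_{s\in S}\nor{T_s\xi_k-\xi_k}\to 0$, and therefore $\nor{T_g\xi_k-\xi_k}\to 0$ for every $g\in G$; these are the sought almost fixed points.

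The main obstacle, and the conceptual content of the theorem, is that the naive \emph{norm} estimate for $T_g\eta_n-\eta_n$ does not decay: since $\nor{\eta_n}$ may grow proportionally to $\diam F_n$, bounding $\nor{b(h)}\leq D\abs{h}$ on the boundary only gives $\nor{T_g\eta_n - \eta_n}=O(\abs{g}^2 K)$, which is bounded but in general bounded away from $0$. The resolution is to settle for \emph{weak} decay, where the $C_0$ hypothesis genuinely sharpens the boundary estimate, and then to recover norm decay for free via Mazur's lemma. One should also record that $\diam F_n\to\infty$, which holds for any F\o lner sequence in an infinite group and is what makes the controlled estimate effective; the case of finite $G$ is trivial.
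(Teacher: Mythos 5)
Your proof is correct and follows essentially the same route as the paper's: the same boundary formula for $T_g\eta_n - \eta_n$, the same splitting of $gF_n\Delta F_n$ into a finite exceptional set and its complement (where the controlled F\o lner bound cancels the $\diam F_n$ factor coming from the weak sublinearity estimate), and the same upgrade from weak to norm decay by applying Mazur's theorem to the vectors $\bigoplus_{s\in S}(T_s\eta_n - \eta_n)$. The only cosmetic difference is that you spell out the telescoping passage from generators to arbitrary $g\in G$ and the role of affineness in commuting $T_s$ with convex combinations, both of which the paper leaves implicit.
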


\begin{proof}
Fix a word length $\lvert \,\cdot\,\rvert$ coming from some finite,
symmetric generating set $S\subset G$. Let $d_n = \diam F_n$. We set $%
F_n(g) = gF_n\Delta F_n\subset \partial F_n\subset B(d_n + 1)$, for each $%
g\in S$. Let $\eta_n = \int b(g) \,d\upsilon_n(g)$.

For all $n\in \mathbb{N}$ we have the \emph{a priori} estimate 
\begin{equation}  \label{eq:apriori}
\begin{split}
\nor{T_g(\eta_n) - \eta_n} &= \bnor{\int b(h) \,d\up_n(g^{\inv}h) -\int b(h)
\,d\up_n(h)} \\
&\leq \frac{1}{\lvert F_n\rvert}\int_{F_n(g)} \nor{b(h)}\, dh \\
&\leq C (d_n+1)\cdot\frac{\lvert \partial F_n\rvert}{\lvert F_n\rvert}\leq
2 C K,
\end{split}%
\end{equation}
where $C = \sup_{s\in S} \nor{b(s)}$.

Therefore, we need only show that for any $\xi\in \mathcal{H}$ and $g\in S$,
we have that 
\begin{equation}
\lim_n\lvert \langle T_g(\eta_n) - \eta_n, \xi \rangle\rvert= 0.
\end{equation}
Indeed, the sequence $(T_g(\eta_n) - \eta_n)_{n\in\mathbb{N}}$ would then
have $0$ as a weak limit point for any $g\in S$. Thus, the sequence $%
\bigoplus_{g\in S} (T_g(\eta_n) - \eta_n) \subset \bigoplus_{g\in S} 
\mathcal{H}$ converges weakly to $0$, so that by passing to the convex hull,
the theorem obtains.

We now fix $\xi\in\mathcal{H}$. By assumption \ref{eq:c0} for every $%
\epsilon>0$ there exists a finite set $E_\epsilon\subset G$ such
that 
\begin{equation}
\lvert \langle b(g), \xi \rangle\rvert<\epsilon\lvert g\rvert
\end{equation}
for all $g\in G\setminus E_\epsilon$.

Since $\lim_n\upsilon_n(E_\epsilon)=0$, we have that for any $g\in
S$,  
\begin{equation}
\begin{split}
\lvert \langle T_g(\eta_n) - \eta_n, \xi \rangle\rvert &= \bbs{\int
\ip{b(h)}{\xi} \,d\up_n(g^{\inv}h) -\int \ip{b(h)}{\xi} \,d\up_n(h)} \\
&= \frac{1}{\lvert F_n\rvert}\bbs{\int_{F_n(g)} \ip{b(h)}{\xi} \,dh} \\
&\leq \frac{1}{\lvert F_n\rvert}\int_{F_n(g)} \lvert \langle b(h), \xi
\rangle\rvert\,dh \\
&\lesssim 2\epsilon (d_n+1)\frac{\lvert \partial F_n\rvert}{\lvert
F_n\rvert}\leq 4K\epsilon,
\end{split}%
\end{equation}
and we are done.
\end{proof}

Hence, if $G$ is a group in the class $\Cal{CF}$, then for any $1$-cocycle $b: G\to \Cal H$ associated to some orthogonal representation, $b$ is not almost inner only if there exists a vector $\xi\in \Cal H$ so that $f(g) := \ip{b(g)}{\xi}$ does not have sublinear growth.

\begin{question} By Proposition 3.1 in \cite{CTV} we know that any almost inner $1$-cocycle has sublinear growth. For a general amenable group, is it the case that any weakly sublinear $1$-cocycle is in fact (strongly) sublinear?
\end{question}

\begin{theorem}[Theorem \ref{main:controlled}] Let $G$ be finitely generated group in the class $\Cal{CF}$. Let $\pi: G\to\mathscr O(\Cal H)$ be an orthogonal representation, and let $b: G\to \Cal H$ be a 1-cocycle associated to $\pi$. Suppose that \begin{equation}\int \frac{1}{\abs{g}}\ip{b(g^{\inv})}{\xi} d\mu_n(g)\to 0\end{equation} for all $\xi\in \Cal H$ and all Reiter sequences $(\mu_n)$. Then the affine action
$G\curvearrowright^T \mathcal{H}$ associated to $b$ admits a
sequence of almost fixed points.
\end{theorem}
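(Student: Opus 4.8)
The plan is to deduce the theorem by feeding the averaging hypothesis into Proposition \ref{thm:h-rigid} to obtain weak sublinearity, and then invoking Theorem \ref{thm:w-sublinear} to produce almost fixed points. Concretely, I would fix $\xi\in\Cal H$, set $f_\xi(g):=\frac{1}{\abs{g}}\ip{b(g)}{\xi}$, show $f_\xi\in \mathscr H^\infty(G)$, and then recognise the hypothesis as precisely the averaging condition of Proposition \ref{thm:h-rigid} applied to $f_\xi$.

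First I would record that $b$ is large scale lipschitz: the cocycle identity gives $b(gs)-b(g)=\pi_g(b(s))$, so orthogonality of $\pi_g$ yields $\nor{b(gs)-b(g)}=\nor{b(s)}$ for each $s\in S$. In particular $\nor{b(g)}\leq D\abs{g}$ with $D=\max_{s\in S}\nor{b(s)}$ (telescoping from $b(e)=0$), so $b^\f(g) = \frac{1}{\abs{g}}b(g)$ is bounded, and by the remark following Definition \ref{defn:higson} the vector-valued map $b^\f$ is higson, i.e. $\nor{b^\f(g)-b^\f(gs)}\ra 0$ as $\abs{g}\ra\infty$ for every $s\in S$. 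Then for fixed $\xi$ the scalar function $f_\xi=\ip{b^\f}{\xi}$ is bounded, and since $\abs{\del f_\xi(g)(s)}=\abs{\ip{b^\f(g)-b^\f(gs)}{\xi}}\leq \nor{b^\f(g)-b^\f(gs)}\,\nor{\xi}$, we get $\nor{\del f_\xi}\in C_0(G)$; hence $f_\xi\in\mathscr H^\infty(G)$.

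Next, using $\abs{g^\inv}=\abs{g}$, I note that $f_\xi(g^\inv)=\frac{1}{\abs{g}}\ip{b(g^\inv)}{\xi}$, so the hypothesis is exactly the statement that $\int f_\xi(g^\inv)\,d\mu_n(g)\ra 0$ for all Reiter sequences $(\mu_n)$. Proposition \ref{thm:h-rigid} then forces $f_\xi\in C_0(G)$, that is, $\frac{1}{\abs{g}}\ip{b(g)}{\xi}\in C_0(G)$. As $\xi\in\Cal H$ was arbitrary, this is precisely condition \eqref{eq:c0}, so $b$ is weakly sublinear and Theorem \ref{thm:w-sublinear} delivers the sought sequence of almost fixed points for $G\ca^T\Cal H$.

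The only step with genuine content is the verification that each $f_\xi$ lies in $\mathscr H^\infty(G)$, and the point I expect to matter most is that this higson property hinges on $b$ being large scale lipschitz, a consequence of the orthogonality of the linear part $\pi$. This is exactly the mechanism that upgrades the weak-topology averaging hypothesis into the pointwise $C_0$ decay required by Theorem \ref{thm:w-sublinear}; the appearance of $b(g^\inv)$ rather than $b(g)$ in the hypothesis is what makes it align with the $f(g^\inv)$ in Proposition \ref{thm:h-rigid}, which is the role played by ``averaging on the right'' emphasised in the introduction.
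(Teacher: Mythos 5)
Your proposal is correct and follows exactly the paper's route: the paper's own proof is literally the one-line observation that the result "follows directly from Proposition \ref{thm:h-rigid} combined with Theorem \ref{thm:w-sublinear}," and you have supplied precisely the details this combination requires (the verification that each $f_\xi=\frac{1}{\abs{g}}\ip{b(g)}{\xi}$ is bounded and higson via the cocycle identity and orthogonality of $\pi$, and the identification of the $b(g^\inv)$ hypothesis with the $f(g^\inv)$ averaging condition). Nothing to add.
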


\begin{proof} The proof follows directly from Proposition \ref{thm:h-rigid} combined with Theorem \ref{thm:w-sublinear}.
\end{proof}

\begin{definition}\label{def-harmonic} Let $G$ be a finitely generated group and let $\mu$ be a probability measure on $G$. A function $u: G\to \Cal V$ into a vector space is said to be \emph{$\mu$-harmonic} if \begin{equation} u(g) = \int u(gs) d\mu(s)\end{equation} for all $g\in G$.
\end{definition}

Let $\mu$ be a probability measure with finite second moment, i.e., $\int \abs{g}^2 d\mu(g)<\infty$. We know, cf.\ Theorem 6.1 in \cite{ShalRC} and Theorem 6.1 in \cite{CreuThesis}, that every group $G$ without property (T) of Kazhdan admits at least one $\mu$-harmonic $1$-cocycle for some (irreducible) representation.

\begin{prop} Let $G$ be a group in the class $\Cal {CF}$, $\pi: G\to \mathscr O(\Cal H)$ be an orthogonal representation, and $b: G\to \Cal H$ be a $\mu$-harmonic $1$-cocycle with $\mu$ having finite second moment. Let $\pi_0$ be the restriction of $\pi$ to the (invariant) subspace $\Cal H_0$ spanned by the image of $b$. If $V := \Bigl\{\frac{1}{\abs{g}}b(g)\Bigr\}$ is precompact, then $\pi_0$ is compact.
\end{prop}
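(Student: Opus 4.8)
The plan is to reduce the statement to the vanishing of the weakly mixing part of $\pi_0$, and then to eliminate that part using harmonicity. Write $\pi_0 = \pi_c \oplus \pi_{\mathrm{wm}}$ for the Jacobs--de Leeuw--Glicksberg decomposition of $\pi_0$ into its maximal compact (almost periodic) subrepresentation on $\Cal H_c$ and its weakly mixing complement on $\Cal H_{\mathrm{wm}}$; since $\pi_0$ is compact exactly when $\Cal H_{\mathrm{wm}} = 0$, it suffices to show $\Cal H_{\mathrm{wm}} = 0$. Let $P$ be the orthogonal projection onto $\Cal H_{\mathrm{wm}}$; as $P$ commutes with every $\pi_g$, the map $b_{\mathrm{wm}} := Pb$ is a $1$-cocycle for $\pi_{\mathrm{wm}}$. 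It is again $\mu$-harmonic (since $P$ commutes with $\pi_g$ and with integration against $\mu$), symmetric (since $b_{\mathrm{wm}}(g^\inv) = -\pi_{g^\inv} b_{\mathrm{wm}}(g)$ and $\pi$ is orthogonal), and large scale lipschitz. Moreover $V_{\mathrm{wm}} := \{\frac{1}{\abs{g}} b_{\mathrm{wm}}(g)\} = P(V)$ is the image of the precompact set $V$ under a norm-one map, hence is itself precompact.

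Next I would feed $b_{\mathrm{wm}}$ into the apparatus of Section \ref{sec:geom-repn}. Since $\pi_{\mathrm{wm}}$ is weakly mixing and $b_{\mathrm{wm}}$ is in particular an array, the weak mean ergodic theorem (Theorem \ref{thm:arraymet}, in its weakly mixing form) gives that $f_\xi(g) := \ip{b_{\mathrm{wm}}(g)}{\xi}$ has almost sublinear growth for every $\xi \in \Cal H_{\mathrm{wm}}$. Together with the precompactness of $V_{\mathrm{wm}}$, this is precisely hypothesis (4) of Proposition \ref{thm:sub-compact} for the symmetric large scale lipschitz map $b_{\mathrm{wm}}$; hence (1) holds and $b_{\mathrm{wm}}$ has sublinear growth. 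In particular $b_{\mathrm{wm}}$ is weakly sublinear, so Theorem \ref{thm:w-sublinear} (applicable because $G \in \Cal{CF}$) produces almost fixed points for the affine action associated to $b_{\mathrm{wm}}$; that is, $b_{\mathrm{wm}}$ is almost inner.

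The decisive step is then a rigidity input: a $\mu$-harmonic $1$-cocycle that is almost inner must vanish, and it is here that the finite second moment of $\mu$ is used. The mechanism is that harmonic cocycles are orthogonal, in the Dirichlet form $Q(b_1, b_2) = \int \ip{b_1(s)}{b_2(s)}\, d\mu(s)$, to the coboundaries $c_\eta(g) := \eta - \pi_g \eta$: harmonicity forces $\int b_{\mathrm{wm}}(s)\, d\mu(s) = 0$, and a short computation using $\pi_{s^\inv} b_{\mathrm{wm}}(s) = -b_{\mathrm{wm}}(s^\inv)$ then gives $Q(b_{\mathrm{wm}}, c_\eta) = 0$ for all $\eta$, while finiteness of the second moment makes $Q$ finite and controls its behaviour along the coboundary approximants furnished by the almost fixed points. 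Being harmonic and a limit of coboundaries, $b_{\mathrm{wm}}$ is orthogonal to itself, so $Q(b_{\mathrm{wm}}, b_{\mathrm{wm}}) = \int \nor{b_{\mathrm{wm}}(s)}^2\, d\mu(s) = 0$, forcing $b_{\mathrm{wm}}$ to vanish on a generating set and hence identically; alternatively one invokes the uniqueness of the harmonic representative of a reduced cohomology class, cf.\ \cite{Shalom}, \cite{ShalRC}. Consequently $b(g) = (I - P) b(g) \in \Cal H_c$ for every $g$, whence $\Cal H_0 = \overline{\mathrm{span}}\{b(g)\} \subseteq \Cal H_c$ and therefore $\Cal H_{\mathrm{wm}} = 0$, proving $\pi_0$ compact.

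I expect the main obstacle to be exactly this last rigidity step. The subtlety is that the coboundary approximants $c_{\eta_n}$ coming from the almost fixed points may have coefficients $\eta_n$ with $\nor{\eta_n} \to \infty$, so the continuity of $Q$ along them---equivalently, the orthogonality of a harmonic cocycle to the full reduced-cohomology closure of the coboundaries rather than merely to the algebraic coboundaries---is the delicate point, and is where I would lean most heavily on the established finite-second-moment theory of harmonic cocycles.
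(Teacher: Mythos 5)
Your proposal is correct and follows essentially the same route as the paper's own proof: project $b$ onto the weakly mixing part of $\pi_0$, use the weak mean ergodic theorem (Theorem \ref{thm:arraymet}) together with precompactness and Proposition \ref{thm:sub-compact} to obtain sublinear growth, invoke Theorem \ref{thm:w-sublinear} to conclude the projected cocycle is almost inner, and then eliminate it by the rigidity of harmonic cocycles. The only differences are cosmetic: the paper argues by contradiction with an arbitrary non-zero weakly mixing invariant subspace rather than through the full Jacobs--de Leeuw--Glicksberg decomposition, and for your ``decisive step'' it simply cites Theorem 6.1 of Creutz's thesis \cite{CreuThesis} (no non-zero harmonic $1$-cocycle into an orthogonal representation can be almost inner) instead of re-deriving it via the Dirichlet-form argument you sketch.
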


\begin{proof} Suppose by contradiction that $\Cal H_0$ contains an non-zero invariant subspace $\Cal K$ on which the restriction of $\pi$ is weakly mixing. Setting $b': G\to \Cal K$ defined by $b'(g) := P_{\Cal K} b(g)$, we then would have that $b'$ is a harmonic $1$-cocycle into a weakly mixing representation such that $V' := P_{\Cal K} V = \Bigl\{\frac{1}{\abs{g}} b'(g)\Bigr\}$ is precompact. Proposition \ref{thm:sub-compact} then implies that $b'(g)$ has sublinear growth; hence, by Theorem \ref{thm:w-sublinear} it is almost inner. However, no non-zero harmonic $1$-cocycle into an orthogonal representation can be almost inner, cf. Theorem 6.1 in \cite{CreuThesis}. Therefore, $b'\equiv 0$ which contradicts the fact that the span of $V'$ is dense in $\Cal K$. Thus, we have shown that $\pi_0$ contains no non-zero, weakly mixing subrepresentation which implies that $\pi_0$ is compact.
\end{proof}

\section{Final Remarks and Open Problems}\label{Final-Remarks}

\subsection{On the growth of harmonic functions}

\begin{Conjecture}\label{mconj:harmonic} Let $G$ be an amenable group, and let $\mu$ be a probability measure with finite second moment and trivial Poisson boundary (cf.\ \cite{KaiVer}). If $u: G\to \bb R$ is a lipschitz $\mu$-harmonic function such that \[\int \frac{1}{\abs{g}} \abs{u(g)} d\mu_n(g)\to 0\] for all Reiter sequences $(\mu_n)$, then $u$ has sublinear growth.
\end{Conjecture}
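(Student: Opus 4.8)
The plan is to situate the statement within the higson-rigidity framework of Section~\ref{sec:geom-repn} and to use harmonicity to defeat the left--right asymmetry that obstructs a direct appeal to Proposition~\ref{thm:h-rigid}. Set $f(g) := \frac{1}{\abs{g}}\abs{u(g)}$, with $f(e):=0$. Since $u$ is large scale lipschitz and $x\mapsto\abs{x}$ is $1$-lipschitz, $f$ is a bounded higson function, so \emph{sublinear growth of $u$ is precisely the assertion $f\in C_0(G)$}. By Proposition~\ref{thm:h-rigid} it therefore suffices to show $\int f(g^{\inv})\,d\mu_n(g)\to 0$ for every Reiter sequence $(\mu_n)$. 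The hypothesis supplies instead $\int f(g)\,d\mu_n(g)\to 0$; thus the whole difficulty is concentrated in passing from left-averages of $f$ to left-averages of the reflection $g\mapsto f(g^{\inv})$, which is exactly the weak-versus-strong sublinearity gap discussed in the introduction.

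First I would record the martingale structure. Because $u(g)=\int u(gs)\,d\mu(s)$, the process $u(X_m)$ along the right random walk $X_m=g\,s_1\cdots s_m$ (with $s_i\sim\mu$ i.i.d.) is a martingale, and the lipschitz bound together with the finite second moment of $\mu$ makes its conditional increment variances uniformly bounded by $\sum_s C(s)^2\mu(s)<\infty$. The Kolmogorov-type strong law for $L^2$ martingales then gives $u(X_m)/m\to 0$ almost surely, i.e.\ $u$ grows sublinearly along almost every trajectory and has vanishing linear drift. This is the correct repackaging of the integral hypothesis: it rules out the obvious obstructions, such as the linear harmonic function $n\mapsto n$ on $\bb Z$, which fails the integral condition precisely because its drift is nonzero.

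The role of the trivial Poisson boundary is to upgrade this almost-everywhere, path-wise sublinearity to the pointwise $C_0$ statement, and to symmetrize it. I would use two inputs. First, triviality of the boundary is reflection-invariant: since $\mu^{*m}(g)=\check\mu^{*m}(g^{\inv})$ one has equal asymptotic entropies $h(\check\mu)=h(\mu)$, so $(G,\check\mu)$ is again Liouville and the reflected function $\tilde u(g):=u(g^{\inv})$, which is harmonic for the reflected walk, inherits the same hypotheses. Second, in the absence of a nontrivial boundary the harmonic measures seen from $e$ and from a far-away point should asymptotically align. Concretely I would argue by contradiction: if $f\notin C_0$ there exist $c>0$ and $g_n$ with $\abs{g_n}\to\infty$ and $u(g_n)\geq c\abs{g_n}$; harmonicity gives $u(g_n)=\int u(g_n w)\,d\mu^{*m}(w)$, so the pushforward $\lambda_{n,m}:=g_n\ast\mu^{*m}$ satisfies $\int f\,d\lambda_{n,m}\gtrsim c$ as soon as $m=o(\abs{g_n})$; the goal is then to choose $m=m(n)\to\infty$ so that triviality of the boundary forces $\lambda_{n,m}$ to be asymptotically left-invariant, producing (after the surgery of Proposition~\ref{thm:h-rigid}) a genuine Reiter sequence that violates the hypothesis.

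The main obstacle is exactly this last step. The measures $\lambda_{n,m}$ manufactured by harmonic averaging are left-invariant only up to the conjugates $g_n^{\inv}g\,g_n$, which wander off as $n\to\infty$, so $\|\lambda_{n,m}-g\ast\lambda_{n,m}\|_1=\|\mu^{*m}-(g_n^{\inv}g\,g_n)\ast\mu^{*m}\|_1$ need not be small: this is the same reflection obstruction flagged for the weak-mixing case in the introduction. Making the Poisson-boundary input quantitative enough to overcome this wandering---for instance, a ray or coupling estimate showing that the walk from $g_n$ aligns in direction with the walk from $e$ in the limit---is the crux, and is, I expect, the reason the statement is posed as a conjecture. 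A parallel, more algebraic route worth attempting is to pass to the harmonic $1$-cocycle $b(g)=\lambda_g u-u$ into the GNS Hilbert space built from a $G$-invariant mean on $\bb R^S$-valued gradients, relate the growth of $u$ to that of $b$, and invoke Proposition~\ref{thm:sub-compact} together with the harmonic-cocycle rigidity used just above; this, however, appears to require a precompactness input on $\{\frac{1}{\abs{g}}b(g)\}$ (equivalently a compactness hypothesis on the linear part) that is not available for a general amenable group.
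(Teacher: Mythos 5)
This statement is Conjecture~\ref{mconj:harmonic}: the paper does not prove it but poses it as an open problem, so the only question is whether your argument closes it---and it does not, as you yourself concede in your final paragraph. Your framing is the correct one: with $f(g) := \frac{1}{\abs{g}}\abs{u(g)}$, sublinear growth of $u$ is exactly $f\in C_0(G)$, $f$ is bounded and higson, and Proposition~\ref{thm:h-rigid} would finish the proof if one knew $\int f(g^{\inv})\,d\mu_n(g)\to 0$ for all Reiter sequences, whereas the hypothesis controls only $\int f(g)\,d\mu_n(g)$; the entire content of the conjecture is this left--right asymmetry. (Indeed, when $G$ is abelian the reflection $\check\mu_n(g):=\mu_n(g^{\inv})$ of a Reiter sequence is again a Reiter sequence, so the conjecture follows from Proposition~\ref{thm:h-rigid} alone, with no use of harmonicity; all the difficulty is non-abelian.) However, one intermediate step is misleading as stated: the martingale step carries no information. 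For \emph{any} lipschitz $\mu$-harmonic $u$ with $\mu$ of finite second moment, $u(X_m)$ is an $L^2$-martingale with uniformly bounded conditional increment variances, so $u(X_m)/m\to 0$ almost surely; this holds for $u(n)=n$ on $\mathbb{Z}$ with $\mu$ symmetric, a harmonic function of exactly linear growth that fails both the integral hypothesis and the conclusion. So this step is not a ``repackaging of the integral hypothesis''---it is a consequence of harmonicity alone and is strictly weaker than what is needed.

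The genuine gap is the one you flag but do not resolve, and it is worth making quantitative to see that it cannot be patched by bookkeeping. If $u(g_n)\geq c\abs{g_n}$, the measures $\lambda_{n,m}=g_n\ast\mu^{\ast m}$ satisfy $\int f\,d\lambda_{n,m}\gtrsim c$ only while the walk has not escaped, i.e.\ only while the typical displacement $\int\abs{w}\,d\mu^{\ast m}(w)$ is $o(\abs{g_n})$, which forces $m=o(\abs{g_n})$ in general and at best $m=o(\abs{g_n}^2)$ in the diffusive (centered, second-moment) regime. On the other hand, left asymptotic invariance of $\lambda_{n,m}$ amounts to $\nor{(g_n^{\inv}gg_n)\ast\mu^{\ast m}-\mu^{\ast m}}_1\to 0$, and triviality of the Poisson boundary yields $\nor{h\ast\mu^{\ast m}-\mu^{\ast m}}_1\to 0$ only for each \emph{fixed} $h$, with no uniformity over the unbounded family of conjugates $h_n=g_n^{\inv}gg_n$. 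In every setting where rates are known, smallness of this norm requires $m$ to dominate a power (at least the square, in the diffusive picture) of $\abs{h_n}$, and $\abs{h_n}$ can grow linearly in $\abs{g_n}$ (e.g.\ in Liouville wreath products), which is incompatible with the escape constraint above. So no choice of $m(n)$ makes $(\lambda_{n,m(n)})$ a Reiter sequence while preserving the lower bound, and the contradiction with the hypothesis is never produced. Your proposal correctly locates the obstruction---aligning harmonic measures seen from $e$ and from $g_n$---but supplies no mechanism to overcome it, which is precisely why the statement remains a conjecture.
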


\noindent Notice that if $u$ is harmonic, then $\abs{u}$ is \emph{subharmonic}, i.e., $\abs{u}(g)\leq \frac{1}{\abs{S}}\sum_{s\in S} \abs{u}(gs)$ for all $g\in G$, so the conjecture may be posed in this generality.

\begin{definition}\label{h-fd} A group $G$ has \emph{property \HFD} of Shalom if any affine action $G\ca^T \Cal H$ on Hilbert space with weakly mixing linear part admits almost fixed points.
\end{definition}

\begin{prop} Suppose that Conjecture \ref{mconj:harmonic} holds for a group $G$ which admits a controlled F\o lner sequence. Then $G$ has property \HFD.
\end{prop}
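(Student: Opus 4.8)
The plan is to establish \HFD\ by showing that the only $1$-cocycle into a weakly mixing representation that is \emph{harmonic} is the trivial one, and then invoking the standard dictionary between reduced $1$-cohomology and harmonic cocycles. Concretely, I would argue by contradiction: suppose $G$ fails to have \HFD, so that there is a weakly mixing orthogonal representation $\pi: G\to\mathscr O(\Cal H)$ and an associated $1$-cocycle that is \emph{not} almost inner, i.e.\ one representing a non-zero class in reduced cohomology. Since an infinite amenable group is without property (T), the results of Shalom and Creutz (\cite{ShalRC}, \cite{CreuThesis}) let me replace this class by a \emph{non-zero} $\mu$-harmonic representative $b_0$ for a symmetric probability measure $\mu$ of finite second moment. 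Restricting $\pi$ to the closed span $\Cal H_0$ of the image of $b_0$ yields a subrepresentation $\pi_0$, which is again weakly mixing, since a subrepresentation of a weakly mixing representation has no non-zero finite-dimensional subrepresentation.

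The next step passes from the vector-valued cocycle to its scalar components. For each $\xi\in \Cal H_0$ set $u_\xi(g):=\ip{b_0(g)}{\xi}$. Because $b_0$ is $\mu$-harmonic and the inner product is continuous and linear, each $u_\xi$ is a $\mu$-harmonic function, and the cocycle identity $b_0(g)-b_0(gs)=-\pi_0(g)b_0(s)$ gives $\abs{u_\xi(g)-u_\xi(gs)}\leq \nor{b_0(s)}\nor{\xi}$, so $u_\xi$ is lipschitz. Since $\pi_0$ is weakly mixing, the weak-mixing conclusion \eqref{wmwmet} of Theorem \ref{main:norm} applied to $b_0$ yields $\int \frac{1}{\abs{g}}\abs{u_\xi(g)}\,d\mu_n(g)\to 0$ for every Reiter sequence $(\mu_n)$; that is, each $u_\xi$ has almost sublinear growth in the sense of Definition \ref{def-growth}.

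Now I would feed this into the hypothesis. Assuming Conjecture \ref{mconj:harmonic} for $G$ (with the measure $\mu$, which is taken to have trivial Poisson boundary), each lipschitz $\mu$-harmonic function $u_\xi$ of almost sublinear growth in fact has sublinear growth, i.e.\ $\frac{1}{\abs{g}}u_\xi(g)\in C_0(G)$. As $\xi$ ranges over $\Cal H_0$ this is precisely the statement that $b_0$ is weakly sublinear, which is hypothesis \eqref{eq:c0} of Theorem \ref{thm:w-sublinear}. Since $G\in\Cal{CF}$, that theorem produces a sequence of almost fixed points for the affine action associated to $b_0$; in other words $b_0$ is almost inner. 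But no non-zero harmonic $1$-cocycle into an orthogonal representation can be almost inner (Theorem 6.1 in \cite{CreuThesis}), forcing $b_0=0$ and contradicting the non-triviality of the chosen cohomology class. Hence every weakly mixing affine action admits almost fixed points, which is property \HFD. Note that this route mirrors the earlier proposition on compactness of $\pi_0$, except that the conjecture now supplies sublinearity of the scalar functions $u_\xi$ directly, bypassing the precompactness assumption needed there to invoke Proposition \ref{thm:sub-compact}.

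The step I expect to be the main obstacle — and the reason the conjecture's hypotheses must be arranged carefully — is the selection of the measure $\mu$: one needs a single symmetric $\mu$ of finite second moment that simultaneously has \emph{trivial Poisson boundary} (so that Conjecture \ref{mconj:harmonic} applies) and supports a non-zero harmonic cocycle into the weakly mixing representation at hand. For groups of polynomial growth this is automatic, since such groups are Liouville and any such measure serves; for a general group in $\Cal{CF}$ the existence of a measure meeting both requirements is the delicate point, and is understood to be part of the standing assumption that the conjecture ``holds for $G$.'' A secondary technical point to verify is the identification of reduced $1$-cohomology with the space of $\mu$-harmonic cocycles, so that non-triviality of the class is equivalent to $b_0\neq 0$.
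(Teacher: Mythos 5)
Your proof is correct and takes essentially the same approach as the paper: the paper's entire proof is the remark that the proposition is ``an easy consequence of Theorem \ref{thm:w-sublinear}'', and the argument intended is precisely your chain --- pass to a non-zero harmonic representative, get almost sublinearity of the scalar functions from Theorem \ref{main:norm}, upgrade to weak sublinearity via the Conjecture, apply Theorem \ref{thm:w-sublinear} to make the cocycle almost inner, and invoke Creutz's theorem to force it to vanish --- exactly mirroring the paper's preceding proposition on compactness of $\pi_0$. The caveat you flag (existence of a single generating symmetric $\mu$ with finite second moment and trivial Poisson boundary, so that both the harmonic-representative machinery and the Conjecture apply) is a real subtlety, but it is one the paper's own one-line proof leaves equally implicit.
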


\noindent The proof is an easy consequence of Theorem \ref{thm:w-sublinear}.

\begin{Remark} A result of Hebisch and Saloff-Coste, Theorem 6.1 in \cite{HebSC}, shows that there a no non-constant real-valued harmonic functions of sublinear growth on a group of polynomial growth. It would be interesting if a variant of this argument could be made to apply to harmonic functions of almost sublinear growth.
\end{Remark}

A stated in Proposition \ref{prop-cf}, the known classes of amenable groups which admit controlled F\o lner
sequences are: groups of (weak) polynomial growth; polycyclic groups, i.e.,
lattices in solvable Lie groups; wreath products $D \wr \mathbb{Z}$ with $D$
finite; semi-direct products $\mathbb{Z}[\frac{1}{mn}]\rtimes_{m/n} \mathbb{Z%
}$, with $m,n$ coprime and $\lvert mn\rvert\geq 2$. The latter three classes
are the work of Tessera, Theorem 11 in \cite{Tessera2}. Each of these
classes is known have property H$_{\text{{\tiny FD}}}$ by the seminal work
of Shalom, Theorems 1.13 and 1.14 in \cite{Shalom}. The advantage to the
approach suggested here is that it may potentially offer a broad, conceptually unified way of
deriving property H$_{\text{{\tiny FD}}}$.  

It follows from an argument given in \cite{Shalom} (Theorem 6.7.2) that the conjecture implies Gromov's theorem. The approach presented here is conceptually
quite different from other approaches to Gromov's theorem \cite{BGT, Gromov,
Hrushovski, Kleiner, ShalomTao}.

We also point out that another consequence of Conjecture \ref{mconj:harmonic} would be that
there are solvable groups, e.g., $\mathbb{Z }\wr \mathbb{Z}$, which do not
admit controlled F\o lner sequences, cf.\ Theorem 1.15 in \cite{Shalom}.

\subsection{On the space $\mathscr H^p(G)$} As a last remark, we develop another line of thought towards establishing the mean ergodic theorem for affine actions of groups of polynomial growth independently of Gromov's theorem.

\begin{theorem}\label{lem:h1} Let $G$ be a one-ended group with a finite, symmetric, generating set $S$. If $f\in \mathscr H^1(G)$, then $f\in C_0(G) + \bb C 1$.
\end{theorem}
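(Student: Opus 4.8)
The plan is to exploit the finiteness of the total edge-variation of $f$, encoded by $\nor{\del f}\in \ell^1(G)$, together with one-endedness, in order to show that $f$ converges to a single constant $c$ at infinity; this is exactly the assertion that $f-c\in C_0(G)$, i.e.\ $f\in C_0(G)+\bb C 1$. Note that only the $\ell^1$-summability of $\del f$ is needed, not the full higson decay. The first step is to set $T(F):=\sum_{v\in G\setminus F}\nor{\del f(v)}$ for finite $F\subset G$; since $\nor{\del f}\in\ell^1(G)$, the tails satisfy $T(F)\to 0$ as $F$ exhausts $G$, and in particular $T(B(n))\to 0$. The point of $\ell^1$-summability, as opposed to mere membership in $\mathscr H^\infty(G)$, is precisely that it lets us control the variation of $f$ accumulated along a path by this tail.

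The second step records the topological input from one-endedness, viewed inside the Cayley graph $\mathrm{Cay}(G,S)$. For every finite $F$, local finiteness of the graph forces $G\setminus F$ to have only finitely many connected components, each adjacent to $F$; one-endedness says exactly one of these is infinite, call it $C_F$, and the remaining finitely many are finite, hence bounded. Two consequences I would record: if $F'\supseteq F$ then the infinite connected set $C_{F'}$ lies in $C_F$, so $C_{B(n)}$ is nested decreasingly in $n$; and for each $n$ there is $R_n\ge n$ with $G\setminus B(R_n)\subseteq C_{B(n)}$, since the finite components of $G\setminus B(n)$ are bounded.

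The heart of the argument is a telescoping estimate along simple paths. Given $g,h\in C_{B(n)}$, I choose a \emph{vertex-disjoint} path $g=g_0,g_1,\dots,g_k=h$ inside the connected component $C_{B(n)}$, with $g_{i+1}=g_i s_i$ for $s_i\in S$; such a simple path exists because $C_{B(n)}$ is a connected subgraph. Telescoping gives $\abs{f(g)-f(h)}\le \sum_{i=0}^{k-1}\abs{\del f(g_i)(s_i)}\le \sum_{i=0}^{k-1}\nor{\del f(g_i)}$, and since the vertices $g_0,\dots,g_{k-1}$ are distinct and all lie in $G\setminus B(n)$, this last sum is at most $T(B(n))$. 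Hence $f$ oscillates by at most $T(B(n))$ over the entire infinite component $C_{B(n)}$. Choosing base points $x_n\in C_{B(n)}$ and using the nesting $C_{B(m)}\subseteq C_{B(n)}$ for $m\ge n$, the sequence $\bigl(f(x_n)\bigr)$ is Cauchy and converges to some $c$; the oscillation bound then upgrades to $\abs{f(g)-c}\le 2\,T(B(n))$ for every $g\in C_{B(n)}$, in particular for every $g$ with $\abs{g}>R_n$. As $T(B(n))\to 0$, this yields $f(g)\to c$ as $\abs{g}\to\infty$, completing the proof.

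I expect the main obstacle to be the correct use of one-endedness rather than the estimates themselves. Two things must be arranged with care: that there is a \emph{single} infinite complementary component, so that $f$ cannot tend to two different constants along two different ends (for the two-ended group $\bb Z$ the statement genuinely fails, e.g.\ a bounded $f$ with $\del f\in\ell^1$ tending to $\pm 1$ at the two ends), and that the connecting path can be taken simple and entirely within the far region $G\setminus B(n)$, so that the telescoping sum is bounded by the $\ell^1$-tail $T(B(n))$ and not by the (unbounded) length of the path.
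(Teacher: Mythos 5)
Your proof is correct and follows essentially the same route as the paper's: choose $r$ with small $\ell^1$-tail, use one-endedness to get a single infinite component of $G\setminus B(r)$, and telescope $f$ along a path inside that component to bound its oscillation by the tail. Your write-up is in fact more careful than the paper's on two points it leaves implicit --- taking the path \emph{simple} so the telescoped variation is dominated by the tail sum, and extracting the limiting constant $c$ via the nesting of the infinite components --- but these are refinements of the same argument, not a different one.
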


\begin{proof} For every $\e>0$, choose $r$ sufficiently large so that \[K_r := \sum_{g\in G\setminus B_r}\sum_{ s\in S} \abs{f(g) - f(gs)}<\e.\] Since $G$ is one-ended $G\setminus B_r$ contains exactly one infinite connected component $U_r$. For every pair of elements $g,h\in U_r$ there exists a sequence of elements $x_1,\dotsc,x_n$ in $ U_r$ such that $g = x_1$, $h= x_n$ and $x_{i+1}^\inv x_i\in S$ for all $i=1,\dotsc,n-1$. Hence it follows by the triangle inequality that \[\abs{f(g) - f(h)}\leq K_r\] which proves the claim.
\end{proof}

In fact, in the case that $f$ is positive, a slightly weaker condition will suffice:

\begin{theorem} For $f\in \ell^\infty(G)$ and $F\in \ell^\infty(G\times S)$, let $f\cdot F(g,s) := f(g)F(g,s)$. Let $G$ be a one-ended group with a finite, symmetric, generating set $S$. Suppose that $f\in \ell^\infty(G)$, $f\geq 0$. If $\nor{f\cdot \del f}\in \ell^1(G)$, then $f\in C_0(G)+ \bb C1$.
\end{theorem}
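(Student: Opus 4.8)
The plan is to reduce to Theorem \ref{lem:h1} by applying it not to $f$ itself but to $f^2$. The point of squaring is that it converts the weighted hypothesis $\nor{f\cdot\del f}\in\ell^1(G)$ into the \emph{unweighted} hypothesis $\nor{\del(f^2)}\in\ell^1(G)$ required there, and the positivity $f\geq 0$ is exactly what makes this conversion clean. Since $f$ is bounded and nonnegative, $f^2$ is a bounded real function, so it suffices to establish $\nor{\del(f^2)}\in\ell^1(G)$; as $\ell^1(G)\subset C_0(G)$ this already yields $f^2\in\mathscr H^1(G)$, and Theorem \ref{lem:h1} then gives $f^2\in C_0(G)+\bb C 1$.

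For the key estimate I would expand, coordinatewise in $s\in S$,
$\del(f^2)(g)(s) = f(g)^2 - f(gs)^2 = (f(g)-f(gs))(f(g)+f(gs))$, and bound each coordinate using $f\geq 0$ by $\abs{f(g)^2-f(gs)^2}\leq \abs{f(g)-f(gs)}\,f(g) + \abs{f(g)-f(gs)}\,f(gs)$. Since $\abs{f(g)-f(gs)}\leq \nor{\del f}(g)$, and since $S$ is symmetric so that $\abs{f(g)-f(gs)} = \abs{f(gs)-f(gs\cdot s^\inv)}\leq \nor{\del f}(gs)$, each summand is controlled by the value of $\nor{f\cdot\del f}$ at one of the two endpoints of the edge $\{g,gs\}$: namely $\abs{f(g)^2 - f(gs)^2}\leq \nor{f\cdot\del f}(g) + \nor{f\cdot\del f}(gs)$. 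Using $\nor{\del(f^2)}(g)\leq \sum_{s\in S}\abs{f(g)^2-f(gs)^2}$ and summing over $g\in G$, the symmetry of $S$ lets me reindex $\sum_{g}\sum_{s} \nor{f\cdot\del f}(gs) = \abs{S}\,\nor{f\cdot\del f}_{\ell^1}$, which yields $\nor{\del(f^2)}_{\ell^1}\leq 2\abs{S}\,\nor{f\cdot\del f}_{\ell^1}<\infty$, as desired.

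Finally I would descend from $f^2$ back to $f$. Writing $f^2 = c + h$ with $c\geq 0$ a constant and $h\in C_0(G)$ (here $c\geq 0$ because $f^2\geq 0$ while $h$ vanishes at infinity), I split into two cases. If $c=0$ then $f^2\in C_0(G)$, which forces $f\in C_0(G)$ since $f\geq 0$. If $c>0$ then for all sufficiently large $\abs{g}$ one has $f(g)=\sqrt{c+h(g)}$ and $\abs{f(g)-\sqrt c}=\abs{h(g)}/(\sqrt{c+h(g)}+\sqrt c)\leq \abs{h(g)}/\sqrt c\to 0$, so $f-\sqrt c\in C_0(G)$. In either case $f\in C_0(G)+\bb C 1$.

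I expect the main obstacle to be conceptual rather than computational: recognizing that passing to $f^2$ is the right maneuver, and that the per-edge inequality must distribute the factor $f(g)+f(gs)$ onto the two endpoints so that each half is absorbed by $\nor{f\cdot\del f}$ at a single vertex. This distribution is precisely where both the hypothesis $f\geq 0$ and the symmetry of $S$ are indispensable; without positivity the factor $f(g)+f(gs)$ cannot be split as two nonnegative pieces matched to the two endpoints. All use of one-endedness is then inherited from Theorem \ref{lem:h1}, so no further geometric input is needed beyond the passage to $f^2$.
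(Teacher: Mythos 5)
Your proof is correct, and it takes a genuinely different route from the paper's at the key technical step, even though both arguments pass to $f^2$ and finish via Theorem \ref{lem:h1} plus positivity. The paper never establishes $\nor{\del(f^2)}\in \ell^1(G)$; instead it subdivides each edge of the Cayley graph $\Gamma(G,S)$ to get a new one-ended graph $\Gamma'$, defines $f'$ on $\Gamma'$ by $f'(g) := f(g)^2$ at vertices and $f'(e) := f(g)f(gs)$ at the midpoint of the edge $e = (g,gs)$, and observes that each half-edge difference, e.g.\ $f'(g)-f'(e) = f(g)(f(g)-f(gs))$, is exactly a value of $f\cdot\del f$; it then invokes Theorem \ref{lem:h1} for $\Gamma'$ to conclude $f^2 \in C_0(G) + \bb C 1$. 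Note that $\Gamma'$ is not a Cayley graph, so strictly speaking the paper is using the graph-theoretic version of Theorem \ref{lem:h1} (whose proof does go through verbatim for one-ended bounded-degree graphs). Your reindexing estimate $\nor{\del(f^2)}_{\ell^1}\leq 2\abs{S}\,\nor{f\cdot\del f}_{\ell^1}$ --- splitting the factor $f(g)+f(gs)$ onto the two endpoints and using the bijection $g\mapsto gs$ together with the symmetry of $S$ --- performs the same double counting internally to $G$, so that Theorem \ref{lem:h1} applies literally as stated for the group. Two things your version buys: first, it shows that the theorem's ``slightly weaker'' hypothesis $\nor{f\cdot\del f}\in\ell^1(G)$ is in fact $\ell^1$-equivalent, for $f\geq 0$, to $\nor{\del(f^2)}\in\ell^1(G)$ (the paper's preceding remark, $\nor{f\cdot\del f}\leq \nor{\del(f^2)}$ pointwise, records only the reverse implication), so the theorem is literally a corollary of Theorem \ref{lem:h1} applied to $f^2$; second, you make explicit the final descent from $f^2\in C_0(G)+\bb C1$ to $f\in C_0(G)+\bb C1$ (the two cases $c=0$ and $c>0$), which the paper compresses into the single phrase ``by the positivity of $f$, this suffices.''
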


\noindent Note that since $f\geq 0$, we have that $\nor{f\cdot \del f}\leq \nor{\del(f^2)}$; hence, by the boundedness of $f$ and standard estimation techniques it follows that if $f^p\in \mathscr H^1(G)$ for any $1\leq p<\infty$, then it holds that $f\in C_0(G) + \bb C1$.

\begin{proof} Let $\Gamma = \Gamma(G,S)$ be the Cayley graph of $G$ with respect to the generating set $S$. We produce a new graph $\Gamma'$ by subdividing each edge in $\Gamma$ so the the vertex set of $\Gamma'$ may be identified with $V(\Gamma)\sqcup E(\Gamma)$ and $\Gamma'$ is again one-ended. We define a map $f': V(\Gamma')\to \bb R$ by $f'(g) := f(g)^2$ for $g\in V(\Gamma)$ and $f'(e) := f(g)f(gs)$ for $e = (g,gs)\in E(\Gamma)$. Now by assumptions we can see that $\nor{\del f'}\in \ell^1( V(\Gamma'))$, so by Theorem \ref{lem:h1}, we can conclude that $f^2\in C_0(G) + \bb C1$. By the positivity of $f$, this suffices to show the result.
\end{proof}

\begin{prop} Let $G$ be a one-ended group in the class $\Cal{CF}$. If $b$ is a $1$-cocycle associated to an ergodic representation $\pi: G\to \mathscr O(\Cal H)$ such that \begin{equation}\frac{1}{\abs{g}} \ip{b(g)}{\xi}\in \mathscr H^1(G)\end{equation} for all $\xi\in \Cal H$, then $b$ is almost inner. The same holds assuming that $\pi$ is weakly mixing and \begin{equation}\frac{1}{\abs{g}} \abs{\ip{b(g)}{\xi}}\in \mathscr H^1(G).\end{equation}\end{prop}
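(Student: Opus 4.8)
The plan is to reduce both assertions to the weak sublinearity hypothesis \eqref{eq:c0} of Theorem \ref{thm:w-sublinear}, after which the conclusion that $b$ is almost inner is immediate. Throughout, fix $\xi\in\Cal H$ and set $f_\xi(g) := \frac{1}{\abs{g}}\ip{b(g)}{\xi}$. Note first that a $1$-cocycle is in particular an array associated to $\pi$: by the Leibniz identity $\pi_g b(h) - b(gh) = -b(g)$, so $\nor{\pi_g b(h) - b(gh)} = \nor{b(g)}$ is bounded uniformly for $g$ in any finite set. Hence Theorem \ref{thm:arraymet} applies to $b$, and moreover $\nor{b(g)}\leq C\abs{g}$ (with $C=\max_{s\in S}\nor{b(s)}$) shows $f_\xi$ is bounded.

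For the first assertion, by hypothesis $f_\xi\in\mathscr H^1(G)$, so since $G$ is one-ended Theorem \ref{lem:h1} furnishes a decomposition $f_\xi = c_\xi + h_\xi$ with $c_\xi\in\bb R$ a constant and $h_\xi\in C_0(G)$ (the constant is unique since $G$ is infinite). The crux is to show $c_\xi = 0$. I would test against a convenient Reiter sequence: let $\nu_n$ be the normalized indicator of a controlled F\o lner sequence $(F_n)$ supplied by the class $\Cal{CF}$. This is a Reiter sequence, since $\nor{\nu_n - g\ast\nu_n}_1 = \abs{gF_n\Delta F_n}/\abs{F_n}\to 0$, and the ergodicity conclusion \eqref{wmet} of Theorem \ref{thm:arraymet}, paired with $\xi$, gives $\int f_\xi\,d\nu_n\to 0$. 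On the other hand $\int f_\xi\,d\nu_n = c_\xi + \int h_\xi\,d\nu_n$, and F\o lner averaging annihilates the $C_0$ function $h_\xi$: for $\e>0$ choosing a finite set $E$ with $\abs{h_\xi}<\e$ off $E$ yields $\abs{\int h_\xi\,d\nu_n}\leq \nor{h_\xi}_\infty\,\abs{E}/\abs{F_n} + \e\to \e$. Hence $c_\xi = 0$, so $f_\xi\in C_0(G)$; as $\xi$ was arbitrary, $b$ satisfies \eqref{eq:c0} and Theorem \ref{thm:w-sublinear} gives that $b$ is almost inner.

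For the second assertion the argument is formally identical with $f_\xi$ replaced by its modulus $F_\xi(g) := \frac{1}{\abs{g}}\abs{\ip{b(g)}{\xi}} = \abs{f_\xi(g)}$. By hypothesis $F_\xi\in\mathscr H^1(G)$, so Theorem \ref{lem:h1} gives $F_\xi = c'_\xi + h'_\xi$ with $h'_\xi\in C_0(G)$ and $c'_\xi\geq 0$ (as $F_\xi\geq 0$). Here I invoke weak mixing rather than mere ergodicity: conclusion \eqref{wmwmet} of Theorem \ref{thm:arraymet} gives $\int F_\xi\,d\nu_n\to 0$, while F\o lner averaging again kills $h'_\xi$, forcing $c'_\xi = 0$. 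Thus $\abs{f_\xi} = F_\xi\in C_0(G)$, whence $f_\xi\in C_0(G)$, so \eqref{eq:c0} holds and Theorem \ref{thm:w-sublinear} applies as before.

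The main obstacle is conceptual rather than computational: Theorem \ref{lem:h1} controls a higson function only up to an additive constant, and the entire point is to show this constant vanishes. The key observation is that the weak mean ergodic theorem detects and annihilates precisely this constant, because averaging along a F\o lner--Reiter sequence kills the complementary $C_0$ part while leaving the constant intact. (One could instead allow an arbitrary Reiter sequence after observing that any Reiter sequence satisfies $\mu_n(E)\to 0$ for every finite $E$, but the explicit F\o lner sequence furnished by the class $\Cal{CF}$ makes the vanishing transparent and keeps the argument self-contained.)
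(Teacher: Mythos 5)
Your proof is correct and takes essentially the same route as the paper, whose entire proof reads ``follows directly from Theorem \ref{lem:h1} and Theorem \ref{thm:w-sublinear}''---i.e., exactly your reduction of the $\mathscr H^1$ hypothesis to the weak sublinearity hypothesis \eqref{eq:c0}. The step you elaborate---killing the constant in the decomposition $f_\xi = c_\xi + h_\xi$ supplied by Theorem \ref{lem:h1} by averaging against the F\o lner--Reiter sequence and invoking the weak mean ergodic theorem (Theorem \ref{thm:arraymet})---is precisely the detail the paper leaves implicit, and it is the only place where the ergodicity (resp.\ weak mixing) hypothesis is actually used, so spelling it out is a genuine service to the reader rather than a departure from the paper's argument.
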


\begin{proof} The proof follows directly from Theorem \ref{lem:h1} and Theorem \ref{thm:w-sublinear}.
\end{proof}

\begin{prop} If $G$ is a group of polynomial growth, then there exists $1\leq p<\infty$ such that for any $1$-cocycle $b: G\to \Cal H$ we have that  \begin{equation}\frac{1}{\abs{g}} \ip{b(g)}{\xi}\in \mathscr H^p(G)\end{equation} for all $\xi\in \Cal H$
\end{prop}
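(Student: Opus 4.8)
The plan is to check directly the three conditions defining membership in $\mathscr H^p(G)$ for the function $f(g) := \frac{1}{\abs{g}}\ip{b(g)}{\xi}$ (with $f(e) = 0$ by the standing convention): that $f$ is bounded, that $\nor{\del f}\in C_0(G)$, and that $\nor{\del f}\in \ell^p(G)$ for a suitable $p$. First I would record the elementary linear bound on the cocycle: iterating the Leibniz identity and applying the triangle inequality gives $\nor{b(g)}\leq C\abs{g}$ with $C = \max_{s\in S}\nor{b(s)}$, so that $\abs{f(g)}\leq C\nor{\xi}$ and $f\in\ell^\infty(G)$.

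The heart of the matter is estimating the variation $\del f(g)(s) = f(g) - f(gs)$. Writing $a(g) := \ip{b(g)}{\xi}$ and using the cocycle identity $b(gs) = \pi_g b(s) + b(g)$, one has $a(gs) = a(g) + \ip{\pi_g b(s)}{\xi}$, whence
\[
f(g) - f(gs) = a(g)\Bigl(\frac{1}{\abs{g}} - \frac{1}{\abs{gs}}\Bigr) - \frac{\ip{\pi_g b(s)}{\xi}}{\abs{gs}}.
\]
I would bound the first term by $\abs{a(g)}\cdot \frac{\bigl|\abs{gs}-\abs{g}\bigr|}{\abs{g}\abs{gs}}\leq C\nor{\xi}/\abs{gs}$, using $\abs{a(g)}\leq C\abs{g}\nor{\xi}$ together with $\bigl|\abs{gs}-\abs{g}\bigr|\leq \abs{s} = 1$, and the second by $C\nor{\xi}/\abs{gs}$. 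This yields $\abs{\del f(g)(s)}\leq 2C\nor{\xi}/\abs{gs}$. Since $\abs{gs}\geq \abs{g}-1$, for all but finitely many $g$ we obtain $\nor{\del f(g)}\leq C'/\abs{g}$ for a constant $C'$ depending only on $\xi$ and $S$. As the word length is proper, $1/\abs{g}\to 0$, so $\nor{\del f}\in C_0(G)$ and $f$ is higson.

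For the $\ell^p$ membership I would sum this pointwise estimate against the growth of the group:
\[
\sum_{g\neq e}\nor{\del f(g)}^p \leq (C')^p\sum_{g\neq e}\frac{1}{\abs{g}^p} = (C')^p\sum_{n\geq 1}\frac{\abs{S(n)}}{n^p}.
\]
Polynomial growth of degree $d$ gives $\abs{S(n)}\leq \abs{B(n)}\ll n^d$, so the series is dominated by $\sum_{n} n^{d-p}$, which converges as soon as $p > d+1$. Taking, for instance, $p = d+2$ completes the argument.

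The only genuine work is the variation estimate in the second step; the summability is a routine consequence of polynomial growth. I do not anticipate a real obstacle, but it is worth emphasizing that the growth hypothesis enters in an essential and quantitative way: the admissible exponent $p$ must exceed $d+1$, so the conclusion is inherently tied to the growth degree of $G$.
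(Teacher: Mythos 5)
Your proof is correct and follows essentially the same route as the paper: a variation estimate $\nor{\del f(g)}\ll 1/\abs{g}$ obtained from the cocycle identity and the linear bound $\nor{b(g)}\leq C\abs{g}$, followed by summation against the polynomial growth of the group to get $\ell^p$-membership for $p$ exceeding the growth degree. The only (inessential) difference is that you estimate the scalar function $\ip{b(g)}{\xi}/\abs{g}$ directly, while the paper states the variation bound for the vector-valued function $b(g)/\abs{g}$, from which the scalar case follows by Cauchy--Schwarz; your write-up also usefully makes explicit the boundedness and $C_0$ checks that the paper leaves implicit.
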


\begin{proof} Fixing a finite generating set $S$, we have that $\sum_{s\in S} \nor{\frac{1}{\abs{g}} b(g) - \frac{1}{\abs{gs}} b(gs)}\ll \frac{1}{\abs{g}}$ choosing an integer $p$ such that $R^{p-2}\gg \abs{B(R)}$, we have that \begin{equation}\sum_{g\in G}\sum_{s\in S} \bnor{\frac{1}{\abs{g}} b(g) - \frac{1}{\abs{gs}} b(gs)}^p\ll \sum_{g\in G} \abs{g}^{-p}\ll \sum_{n\in \bb N} n^{-2}\end{equation} from which the result easily obtains.\end{proof}

We formulate the following conjecture as an alternative approach to Gromov's theorem.

\begin{Conjecture}\label{thm:hp+} If $G$ is a one-ended group of polynomial growth, then for any $1\leq p<\infty$ any \emph{positive} function  $f\in \mathscr H^p(G)$ belongs to $C_0(G) + \bb C1$.
\end{Conjecture}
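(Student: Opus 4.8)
The plan is to reduce the conjecture to showing that a positive $f\in\mathscr H^p(G)$ converges to a single constant along the unique end of $G$, since $C_0(G)+\bb C 1$ consists exactly of the functions admitting a limit at infinity. Suppose not, so that $\ell:=\liminf_{\abs{g}\to\infty}f(g)<\limsup_{\abs{g}\to\infty}f(g)=:L$, fix $\ell<a<b<L$, and consider the superlevel sets $\Omega_t:=\{g:f(g)>t\}$ for $t\in(a,b)$. For each such $t$ both $\Omega_t$ and its complement are unbounded; since $G$ is one-ended, removing a finite edge set from the unique infinite component of $G\setminus B(r)$ cannot disconnect it into two infinite pieces, so $\del\Omega_t$ is infinite, and in fact the end must cross $\Omega_t$ at infinitely many scales.

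First I would set up a discrete coarea estimate on the dyadic annuli $A_n:=B(2^{n+1})\setminus B(2^n)$. Writing $\del\Omega_t$ for the edge boundary of $\Omega_t$, the coarea formula gives
\[\int_a^b \#\bigl(\del\Omega_t\cap A_n\bigr)\,dt\;\le\;\sum_{g\in A_n}\nor{\del f(g)}_1,\]
so it suffices to contrast a lower bound for the left-hand integral with an upper bound for the right-hand sum. For the lower bound I would invoke the $d$-dimensional isoperimetric inequality enjoyed by a group of polynomial growth of degree $d$: at a scale $n$ where $\Omega_t$ separates the two parts of the end inside $A_n$, its boundary must span the cross-section of the end and so has cardinality $\gtrsim 2^{n(d-1)}$, whence the left-hand side is $\gtrsim (b-a)\,2^{n(d-1)}$. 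For the upper bound, Hölder's inequality together with the polynomial growth bound $\abs{A_n}\lesssim 2^{nd}$ gives
\[\sum_{g\in A_n}\nor{\del f(g)}_1\;\lesssim\;\abs{A_n}^{1-1/p}\,b_n\;\lesssim\;2^{nd(1-1/p)}\,b_n,\qquad b_n:=\Bigl(\sum_{g\in A_n}\nor{\del f(g)}^p\Bigr)^{1/p}.\]
Combining the two estimates forces $b_n\gtrsim 2^{n(d/p-1)}$ at infinitely many scales, so that $\sum_n b_n^p\gtrsim\sum_n 2^{n(d-p)}$. When $1\le p\le d$ the latter series diverges, contradicting the membership $f\in\mathscr H^p(G)$, which amounts to $\sum_n b_n^p\le\sum_{g\in G}\nor{\del f(g)}^p<\infty$. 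Thus $f$ converges, and the conjecture holds in the range $p\le d$; for $p=1$ this recovers Theorem \ref{lem:h1}.

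The hard part is the regime $p>d$, where $\sum_n 2^{n(d-p)}$ converges and the scheme above collapses. This is not merely a defect of the method: on $\bb Z^d$ a positive directional function $f(g)=2+\varphi(g/\abs{g})$, with $\varphi$ continuous on the unit sphere, is higson with $\nor{\del f(g)}\asymp\abs{g}^{-1}$ and hence lies in $\mathscr H^p(\bb Z^d)$ for every $p>d$, yet has no limit at infinity. Consequently the codimension-one isoperimetric lower bound is genuinely too weak once $p$ exceeds the growth degree, and positivity alone does not suppress such oscillation. I expect this high-$p$ range to be the main obstacle: closing it seems to require a sharper, weighted coarea estimate keyed to the criterion $\nor{f\cdot\del f}\in\ell^1(G)$ of the preceding theorem---one that extracts extra decay precisely where $f$ is small---and it is plausible that the statement can be established in full only after strengthening the positivity hypothesis or restricting $p$ in terms of the growth degree.
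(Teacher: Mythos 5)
The first thing to note is that the paper offers no proof of this statement: it is posed as Conjecture \ref{thm:hp+}, an open problem intended as an alternative route to Gromov's theorem, so there is no ``paper's proof'' to compare your argument against. What you have actually produced is (a) a proof sketch in the range $p \le d$ and (b) a refutation of the conjecture as stated in the range $p > d$. Part (b) is, modulo one small fix, correct, and it is the most valuable part of your proposal: on $\bb Z^d$ with $d \ge 2$ (one-ended, polynomial growth of degree $d$), the function $f(g) = 2 + \varphi(g/\abs{g})$ with $\varphi$ nonconstant and \emph{Lipschitz} on the sphere (mere continuity does not give the decay rate $\nor{\del f(g)} = O(1/\abs{g})$ that you use) is positive, lies in $\mathscr H^p(\bb Z^d)$ for every $p > d$ since $\sum_g \abs{g}^{-p} \asymp \sum_n n^{d-1-p} < \infty$, and has different limits along different rays, hence does not lie in $C_0(G) + \bb C 1$. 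Note also that the word ``positive'' in the conjecture carries no content: higson functions are bounded by definition and $\del(f+c) = \del f$, so any counterexample becomes a positive one after adding a large constant (in contrast to the preceding theorem in the paper, whose hypothesis $\nor{f\cdot \del f}\in \ell^1(G)$ is not invariant under adding constants). So the conjecture is false as stated and must be restricted, e.g.\ to $p$ at most the growth degree, exactly as you suspect.

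Your argument in the range $p \le d$, however, has a genuine gap. The coarea inequality is fine, but the lower bound rests on the claim that $\del\Omega_t$ ``separates the two parts of the end'' inside infinitely many annuli $A_n$, and nothing in your setup forces this. That $\Omega_t$ and its complement are both unbounded does not mean both meet every large annulus: $\{f \le t\}$ could be a union of finite ``islands'' centered at a sparse sequence $x_k$ with $\abs{x_k}$ growing arbitrarily fast, in which case $\del\Omega_t \cap A_n = \emptyset$ at all scales between consecutive islands and no cross-section of the end is ever spanned, so the left-hand side of your coarea estimate admits no lower bound. This configuration must be handled by a separate argument; for instance, the higson condition forces each far-away island to carry a transition region of diameter at least $(b-a)/\sup_{\abs{g}\ge R}\nor{\del f(g)}$, and a Jensen-type estimate along rays (in $\bb Z^d$, say) shows an island of inradius $r$ contributes $\gtrsim r^{d-p}\ge 1$ to $\sum_g \nor{\del f(g)}^p$ when $p\le d$, so infinitely many islands already contradict $f\in\mathscr H^p(G)$; a complete proof needs this case analysis (or a $p$-capacity/parabolicity argument) in addition to the annular separation estimate. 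Separately, your isoperimetric input $\gtrsim 2^{n(d-1)}$ requires a growth \emph{lower} bound $\abs{B(n)}\gtrsim n^d$, which for a general group of polynomial growth is only available via Gromov's theorem and Bass--Guivarc'h; that is legitimate if one only wants the conjecture as a freestanding statement, but it is circular for the purpose the paper intends the conjecture to serve.
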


\section*{Acknowledgements}

We would like to thank Professors Yehuda Shalom and Terence Tao for teaching
seminars on the various approaches to Gromov's theorem on groups of
polynomial growth in the Fall 2011 quarter at UCLA which stimulated our
thoughts in this direction. We are grateful to Darren Creutz and Jesse Peterson for useful comments.
 We are especially grateful to Yehuda Shalom for encouragement.

\end{document}